\newtheorem{theorem}{Theorem}
\newtheorem{lemma}{Lemma}
\newtheorem{definition}{Definition}
\newtheorem{assumption}{Assumption}
\newproof{pt}{Proof}
\newproof{pot}{Proof of Theorem \ref{th:main}}
\newproof{pot2}{Proof of Theorem \ref{th:3456}}
\newproof{sol}{Solution}
\journal{https://arxiv.org/archive/math}
\begin{document}

\begin{frontmatter}



\title{Application of Coupled Fixed (or Best Proximity) Points in Market Equilibrium in Oligopoly Markets.}


\author[uni,dzh]{Y. Dzhabarova}
\author[uni,kab]{S. Kabaivanov}
\author[uni,rus]{M. Ruseva}
\author[uni,zla]{B. Zlatanov}

\address[uni]{University of Plovdiv ``Paisii Hilendarski'',
24 ``Tzar Assen'' Str.,
Plovdiv, 4000,
Bulgaria}
\fntext[dzh]{j\_jabarova@yahoo.com}
\fntext[kab]{stanimir.kabaivanov@gmail.com}
\fntext[rus]{ruseva\_margarita@mail.bg}
\fntext[zla]{bzlatanov@gmail.com}

\begin{abstract}
We present a possible kind of generalization of the notion of ordered pairs of cyclic maps and coupled fixed points and its application in modelling of equilibrium in oligopoly markets. We have obtained sufficient conditions for the existence and uniqueness of fixed (or best proximity) points in complete metric spaces (uniformly convex Banach spaces). We get an error estimates of the fixed (or best proximity), provided that we have used sequences of successive iterations.
We illustrate one possible application of the results by building a pragmatic model on competition in oligopoly markets. To achieve this goal, we use an approach based on studying the response functions of each market participant, thus making it possible to address both Cournot and Bertrand industrial structures with unified formal method. In contrast to the restrictive theoretical constructs of duopoly equilibrium, our study is able to account for real-world limitations like minimal sustainable production levels and exclusive access to certain resources. We prove and demonstrate that by using carefully constructed response functions it is possible to build and calibrate a model that reflects different competitive strategies used in extremely concentrated markets. The response functions approach makes it also possible to take into consideration different barriers to entry. By fitting to the response functions rather than the profit maximization of the payoff functions problem we alter the classical optimization problem to a problem of coupled fixed points, which has the benefit that considering corner optimum, corner equilibria and convexity condition of the payoff function can be skipped.
\end{abstract}

\begin{keyword}

fixed point \sep coupled fixed points \sep duopoly equilibrium \sep response functions \sep imperfect competition
\MSC[2010] 41A25 \sep 47H10 \sep 54H25 \sep 46B20
\end{keyword}

\end{frontmatter}



\section{Introduction}

The Banach contraction principle is a powerful tool both in pure and applied mathematics. 
It states that in a complete metric space $(X,\rho)$ any contraction map $T:X\to X$ has a fixed point, i.e. $\min\{\rho(x,Tx):x\in X\}=0$. A lot of results in modelling real world processes in applied mathematics lead to the problem of finding $\min\{\rho(x,Tx):x\in X\}$. It may happen that the above minimum is greater than zero. One approach for tackling the over said issues utilizes the idea of best proximity points, firstly presented in \cite{EV}, where a sufficient condition for the existence and the uniqueness of best proximity points in uniformly convex Banach spaces is obtained.

There is a great number of 
generalizations of the mentioned above principle. One class of such generalizations appears, when the constructed model may depends on two parameters, 
i.e. $F:X\times X\to X$. The notion of coupled fixed points \cite{GL} and of a coupled best proximity points for an ordered pair $(F,G)$, $F:A\times A\to B$, $G:B\times B\to A$, where $A,B\subset X$  \cite{GRK,SK}, is relevant in this context. Deep results in the theory of coupled fixed points can be found for example in \cite{Berinde22,Berinde23,BP}. 

Following \cite{Cournot,Friedman,Smith} the model of duopoly market consists of two players $i=1,2$, each player corrects its production accordingly to his production and the productions of the other player, i.e. $F_i:A_1\times A_2\to A_i$, where $F_i$ be the response function of the $i$ player and $A_i$ be the production set of the $i$.
That is why we  could not apply the known results about coupled fixed (or best proximity) points results, i.e considering the response functions to be $F:A\times A\to B$, $G:B\times B\to A$, in the theory of oligopoly (duopoly) markets. 

There are numerous issues about fixed points and best proximity points that are not simple to be solved or can not be solved precisely. Well known benefits of fixed points results are the error estimates of the successive iterations and the rate of convergence. That is why an estimation of the error when an iterative process is used is of interest, when fixed points or best proximity points are investigated. An extensive study about approximations of fixed points can be found in \cite{Ber}. A to begin with result within the approximation of the sequence of successive iterations, converging to the best proximity point for cyclic contractions, is obtained in \cite{Z}.  This result was extended about a coupled best proximity point in \cite{Ilchev-zlatanov,Zlatanov-FPT}. 

We have illustrated in the application section that starting form any initial level of production an equilibrium point exist and we have calculated the error estimates.

\section{Preliminaries}


We will recall the needed notions and results, that we will use.

Let $(X,\rho )$ be a metric space.
A distance between two subsets $A,B\subset X$ is defined by ${\rm dist}(A,B)=\inf\{\rho (x,y):x\in A, y\in B\}$.
Following \cite{EV} let $A$ and $B$ be nonempty subsets of a metric space $(X,\rho )$.
The map $T:A\bigcup B\to A\bigcup B$ is called a cyclic map if $T(A)\subseteq B$ and $T(B)\subseteq A$.
A point $\xi\in A$ is called a best proximity point of the cyclic map $T$ in $A$ if $\rho (\xi,T\xi )={\rm dist}(A,B)$.

\begin{definition}\label{defSK1}(\cite{SK})
	Let $A$ and $B$ be nonempty subsets of a metric space $(X,\rho)$, $F:A\times A\to B$. An ordered pair $(x,y)\in A\times A$ is called a coupled best proximity point of $F$ if
	$\rho(x,F(x,y))=\rho(y,F(y,x))={\rm dist}(A,B)$.
\end{definition}

Let $A$ be nonempty subset of a metric space $(X,\rho )$.
The map $T:A\to A$ is said to have a fixed point $x\in A$ if $\rho (\xi,T\xi )=0$.

\begin{definition}\label{defGL}(\cite{GL})
	Let $A$ and $B$ be nonempty subsets of a metric space $(X,\rho)$, $F:A\times A\to A$. An ordered pair $(x,y)\in A\times A$ is said to be a coupled fixed point of $F$ in $A$ if $x=F(x,y)$ and $y=F(y,x)$.
\end{definition}

In order to apply the technique of coupled best proximity points and coupled fixed points we will generalize the mentioned above notions.
When we investigate duopoly with players' response functions $F$ and $f$, we have seen that each player using the information about his production and the 
rival's production choose a change in his production, i.e. we define $F:A\times B\to A$ instead of the cyclic type of maps $F:A\times B\to B$ (Definition \ref{defSK1}).
Therefore  we introduce generalizations of Definition \ref{defSK1} and Definition \ref{defGL}.

\begin{definition}\label{new-fixed-point}
	Let $A_x$, $A_y$ be nonempty subsets of a metric space $(X,\rho)$, $F:A_x\times A_y\to A_x$, $f:A_x\times A_y\to A_y$. 
	An ordered pair $(\xi,\eta)\in A_x\times A_y$ is called a coupled fixed point of $(F,f)$ if
	$\xi=F(\xi,\eta)$ and $\eta=f(\xi,\eta)$.
\end{definition}

\begin{definition}\label{new-best-proximity}
	Let $A_x$, $A_y$ be nonempty subsets of a metric space $(X,\rho)$, $F:A_x\times A_y\to A_x$, $f:A_x\times A_y\to A_y$. 
	An ordered pair $(\xi,\eta)\in A_x\times A_y$ is called a coupled best proximity point of $(F,f)$ if
	$\rho(\eta,F(\xi,\eta)=\rho(\xi,f(\xi,\eta)={\rm dist}(A_x,A_y)$.
\end{definition}

\begin{definition}\label{iterated_sequence}
	Let $A_x$, $A_y$ be nonempty subsets of $X$. Let $F:A_x\times A_y\to A_x$, $f:A_x\times A_y\to A_y$.
	For any pair $(x,y)\in A_x\times A_y$ we define the sequences $\{x_n\}_{n=0}^\infty$ and $\{y_n\}_{n=0}^\infty$ by
	$x_0=x$, $y_0=y$ and $x_{n+1}=F(x_{n},y_{n})$, $y_{n+1}=f(x_{n},y_{n})$ for all $n\geq 0$.
\end{definition}

Everywhere, when considering the sequences $\{x_n\}_{n=0}^\infty$ and $\{y_n\}_{n=0}^\infty$ we will assume that they are the sequences defined in Definition \ref{iterated_sequence}.

We will generalized the contraction condition from \cite{EV} for the maps, defined in Definition \ref{new-fixed-point} and Definition \ref{new-best-proximity}.

\begin{definition}\label{cyclic-contraction}
	Let $A_x$, $A_y$ be nonempty subsets of a metric space $(X,\rho)$. Let there exist a subset $D\subseteq A_x\times A_y$ and maps $F:D\to A_x$ and $f:D\to A_y$, such that $(F(x,y),f(x,y))\subseteq D$ for every $(x,y)\in D$. 
	The ordered pair of ordered pairs $(F,f)$ is said to be a cyclic contraction of type one ordered pair
	if there exist non-negative numbers $\alpha,\beta$, such that $\max\{\alpha+\gamma,\beta+\delta\}<1$ and there holds the inequality
	\begin{equation}\label{eq-cyclic-contraction}
	\rho(F(x,y),F(u,v))+\rho(f(z,w),f(t,s)\leq \alpha \rho(x, u)+\beta \rho(y,v)+\gamma\rho(z, t)+\delta\rho(w,s)
	\end{equation}
	for all $(x,y), (u,v), (z,w),(t,s)\in D$.
\end{definition}

\begin{definition}\label{cyclic-contraction-2}
	Let $A_x$, $A_y$ be nonempty subsets of a metric space $(X,\rho)$. Let there exist a subset $D\subseteq A_x\times A_y$ and maps $F:D\to A_x$ and $f:D\to A_y$, such that $(F(x,y),f(x,y))\subseteq D$ for every $(x,y)\in D$. 
	The ordered pair of ordered pairs $(F,f)$ is said to be a cyclic contraction of type two ordered pair
	if there exist non-negative numbers $\alpha,\beta$, such that $\alpha +\beta<1$ and there holds the inequality
	\begin{equation}\label{eq-cyclic-contraction-1}
	\rho(F(x,y),f(u,v))\leq \alpha \rho(x, v)+\beta \rho(y,u)+(1-(\alpha +\beta)){\rm dist}(A_x,A_y)
	\end{equation}
	for all $(x,y), (u,v)\in D$.
\end{definition}

The norm--structure of the underlying space plays a crucial role in the proofs \cite{EV}.

Whenever we deal with a distance in $(X,\|\cdot\| )$, we will always assume that it is generated by the norm $\|\cdot\|$ i.e. $\rho (x,y)=\|x-y\|$.

The uniformly convexity plays a vital part within the proofs of best proximity points.

\begin{definition}
	Let $(X,\|\cdot\|)$ be a Banach space. For every $\varepsilon\in (0,2]$ we define the modulus of convexity of $\|\cdot\|$ by
	$$
	\delta_{\|\cdot\|}(\varepsilon)=\inf\left\{1-\left\|\frac{x+y}{2}\right\|:\|x\|\leq 1,\|y\|\leq 1, \|x-y\|\geq\varepsilon\right\}.
	$$
	The norm is called uniformly convex if $\delta_X(\varepsilon)>0$ for all $\varepsilon\in (0,2]$. The space $(X,\|\cdot\|)$ is then called a uniformly convex space.
\end{definition}

\begin{lemma}(\cite{EV})\label{Eldred-2}
	Let $A$ be a nonempty closed, convex subset, and $B$ be a nonempty closed subset of a uniformly convex Banach space. Let
	$\{x_n\}_{n=1}^\infty$ and $\{z_n\}_{n=1}^\infty$ be sequences in $A$ and $\{y_n\}_{n=1}^\infty$ be a sequence in $B$ satisfying:\\
	1) $\lim_{n\to\infty}\|x_n-y_n\|={\rm dist} (A,B)$;\\
	2) $\lim_{n\to\infty}\|z_n-y_n\|={\rm dist} (A,B)$;\\
	then $\lim_{n\to\infty}\|x_n-z_n\|=0$.
\end{lemma}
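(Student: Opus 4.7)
The plan is a proof by contradiction that exploits the geometry of the uniformly convex space. First I would dispose of the trivial case $d:={\rm dist}(A,B)=0$: the triangle inequality
$\|x_n-z_n\|\leq\|x_n-y_n\|+\|y_n-z_n\|$
immediately forces $\|x_n-z_n\|\to 0$, so there is nothing to prove.

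Assume $d>0$ and suppose, toward a contradiction, that $\|x_n-z_n\|\not\to 0$. Passing to a subsequence I may assume there exists $\varepsilon_0>0$ with $\|x_n-z_n\|\geq\varepsilon_0$ for all $n$. The idea is to rescale $x_n-y_n$ and $z_n-y_n$ so that they lie in the closed unit ball, and then to apply uniform convexity at their midpoint. Set $r_n:=\max\{\|x_n-y_n\|,\|z_n-y_n\|,d\}$, so that $r_n\to d$, and define $u_n:=(x_n-y_n)/r_n$ and $v_n:=(z_n-y_n)/r_n$. Then $\|u_n\|,\|v_n\|\leq 1$, and for $n$ large enough we have $r_n<2d$, so $\|u_n-v_n\|=\|x_n-z_n\|/r_n\geq\varepsilon_0/(2d)$.

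Uniform convexity supplies the upper bound: by definition of the modulus,
$\|(u_n+v_n)/2\|\leq 1-\delta_{\|\cdot\|}(\varepsilon_0/(2d))$.
The convexity of $A$ supplies a matching lower bound: $(x_n+z_n)/2\in A$, so $\|(x_n+z_n)/2-y_n\|\geq d$, which rescales to $\|(u_n+v_n)/2\|\geq d/r_n$. Combining the two estimates yields
$d/r_n\leq 1-\delta_{\|\cdot\|}(\varepsilon_0/(2d))$,
i.e.\ $d\leq r_n\bigl(1-\delta_{\|\cdot\|}(\varepsilon_0/(2d))\bigr)$ for all sufficiently large $n$. Letting $n\to\infty$ gives $d\leq d\bigl(1-\delta_{\|\cdot\|}(\varepsilon_0/(2d))\bigr)$, which contradicts $\delta_{\|\cdot\|}(\varepsilon_0/(2d))>0$ (guaranteed by uniform convexity since $\varepsilon_0/(2d)>0$).

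The main point that needs care, rather than a deep obstacle, is the choice of the normalization $r_n$: the vectors $x_n-y_n$ and $z_n-y_n$ are not a priori in the unit ball and do not have equal norms, so one cannot feed them directly into the definition of $\delta_{\|\cdot\|}$. Taking $r_n$ as the maximum of the three quantities $\|x_n-y_n\|,\|z_n-y_n\|,d$ simultaneously places $u_n,v_n$ in the admissible set for the modulus inequality and keeps the lower bound $d/r_n$ bounded away from numbers strictly less than $1$ in the limit, which is exactly what is needed to close the contradiction.
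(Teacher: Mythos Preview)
Your argument is correct. Note, however, that the paper does not supply its own proof of this lemma: it is quoted verbatim from \cite{EV} and used as a black box, so there is no in-paper proof to compare against. Your contradiction argument via rescaling and the modulus of convexity is exactly the standard one; in fact the paper records the key estimate you use as inequality~(\ref{eq:1}), $\left\|\frac{x+y}{2}-z\right\|\leq\bigl(1-\delta_X(r/R)\bigr)R$ whenever $\|x-z\|,\|y-z\|\leq R$ and $\|x-y\|\geq r$, which with $R=r_n$ and $r=\varepsilon_0$ gives your bound in one line without the explicit normalization step. One cosmetic remark: to feed $\varepsilon_0/(2d)$ into $\delta_{\|\cdot\|}$ you implicitly need $\varepsilon_0/(2d)\leq 2$; this is automatic since $\|u_n-v_n\|\leq\|u_n\|+\|v_n\|\leq 2$, but it would not hurt to say so.
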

\begin{lemma}(\cite{EV})\label{Eldred-1}
	Let $A$ be a nonempty closed, convex subset, and $B$ be a nonempty closed subset of a uniformly convex Banach space. Let
	$\{x_n\}_{n=1}^\infty$ and $\{z_n\}_{n=1}^\infty$ be sequences in $A$ and $\{y_n\}_{n=1}^\infty$ be a sequence in $B$ satisfying:\\
	1) $\lim_{n\to\infty}\|z_n-y_n\|={\rm dist} (A,B)$;\\
	2) for every $\varepsilon >0$ there exists $N_0\in\mathbb{N}$, such that for all $m>n\geq N_0$, $\|x_n-y_n\|\leq {\rm dist} (A,B)+\varepsilon$,\\
	then for every $\varepsilon >0$, there exists $N_1\in\mathbb{N}$, such that for all $m>n>N_1$, holds $\|x_m-z_n\|\leq\varepsilon$.
\end{lemma}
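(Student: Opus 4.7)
The plan is a proof by contradiction using uniform convexity, paralleling Lemma~\ref{Eldred-2}. Write $d = {\rm dist}(A,B)$. Hypothesis (2) should presumably be read as $\|x_m - y_n\|\le d+\varepsilon$ for $m>n\ge N_0$, since otherwise $m$ plays no role in it; I proceed under this reading. If $d=0$, the triangle inequality $\|x_m-z_n\|\le\|x_m-y_n\|+\|y_n-z_n\|$ finishes the job, so I focus on $d>0$.

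Suppose, for contradiction, that there exist $\varepsilon_0>0$ and index pairs $m_k>n_k$ with $n_k\to\infty$ satisfying $\|x_{m_k}-z_{n_k}\|\ge\varepsilon_0$. Uniform convexity lets me pick $t\in(0,d)$ so small that
\[ (d+t)\bigl(1-\delta_{\|\cdot\|}(\varepsilon_0/(d+t))\bigr)<d. \]
For all large $k$, hypotheses (1) and (2) give $\|x_{m_k}-y_{n_k}\|\le d+t$ and $\|z_{n_k}-y_{n_k}\|\le d+t$. Rescaling, the vectors $u_k=(x_{m_k}-y_{n_k})/(d+t)$ and $v_k=(z_{n_k}-y_{n_k})/(d+t)$ lie in the closed unit ball with $\|u_k-v_k\|\ge\varepsilon_0/(d+t)$, so by the definition of the modulus of convexity,
\[ \left\|\tfrac{1}{2}(x_{m_k}+z_{n_k})-y_{n_k}\right\|\le(d+t)\bigl(1-\delta_{\|\cdot\|}(\varepsilon_0/(d+t))\bigr)<d. \]
Since $A$ is convex, $\tfrac{1}{2}(x_{m_k}+z_{n_k})\in A$, so the left-hand side is at least $d$ by definition of ${\rm dist}(A,B)$ --- a contradiction. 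Hence for every $\varepsilon>0$ one may take $N_1$ large enough that $\|x_m-z_n\|\le\varepsilon$ for all $m>n\ge N_1$.

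The main obstacle --- really the only non-cosmetic one --- is the calibration of $t$: one needs $\delta_{\|\cdot\|}$ to be strictly positive at the argument $\varepsilon_0/(d+t)$, which is precisely what uniform convexity guarantees, and one must check that the continuity of the right-hand side in $t$ allows the strict inequality displayed above to be achieved for $t$ sufficiently small. The auxiliary bookkeeping (extracting subsequences, handling $d=0$ separately, and tracking whether $m$ or $n$ indexes each sequence) is routine.
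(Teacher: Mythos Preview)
The paper does not supply its own proof of this lemma; it is quoted from \cite{EV} and used as a black box. Your argument is correct and is essentially the standard proof from that reference: assume failure along a subsequence, use hypotheses (1) and (2) to trap $x_{m_k}$ and $z_{n_k}$ in a ball of radius $d+t$ about $y_{n_k}$, and apply the modulus-of-convexity inequality to force the midpoint (which lies in $A$ by convexity) strictly closer to $y_{n_k}$ than $d$. Your reading of hypothesis (2) as $\|x_m-y_n\|\le d+\varepsilon$ is the intended one; the printed $\|x_n-y_n\|$ is a typo, as the quantifier over $m$ would otherwise be vacuous.

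One small point worth tightening: rather than invoking continuity of $\delta_{\|\cdot\|}$ in $t$, you can bypass it using only monotonicity. Assuming without loss of generality that $\varepsilon_0\le 2d$ (otherwise replace $\varepsilon_0$ by any smaller positive number), set $c=\delta_{\|\cdot\|}(\varepsilon_0/(2d))>0$; then for every $t\in(0,d)$ one has $\varepsilon_0/(d+t)>\varepsilon_0/(2d)$ and hence $\delta_{\|\cdot\|}(\varepsilon_0/(d+t))\ge c$, so $(d+t)(1-\delta_{\|\cdot\|}(\varepsilon_0/(d+t)))\le(d+t)(1-c)<d$ as soon as $t<cd/(1-c)$. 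This removes any appeal to regularity of the modulus beyond what the definition gives.
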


The inequality
\begin{equation}\label{eq:1}
\left\|\frac{x+y}{2}-z\right\|\leq \left(1-\delta_X\left(\frac{r}{R}\right)\right)R
\end{equation}
holds for any $x,y,z\in X$, $R>0$, $r\in [0,2R]$, $\|x-z\|\leq R$, $\|y-z\|\leq R$ and $\|x-y\|\geq r$, provided that 
$X$ is a uniformly convex \cite{EV}.

The modulus of convexity $\delta_X(\varepsilon)$ is a strictly increasing function, provided that the underlaying space is uniformly convex, and its inverse function $\delta^{-1}$ exists.
If the inequality $\delta_{\|\cdot\|}(\varepsilon)\geq C\varepsilon^q$ holds for some constants $C,q>0$ and for any $\varepsilon\in (0,2]$, the modulus of convexity is said to be of power type $q$.
The moduli of convexity with respect to the $p$--norm in $\ell_p$ or $L_p$ are of power type and the inequalities $\delta_{\|\cdot\|_p}(\varepsilon)\geq \frac{\varepsilon^p}{p2^p}$ for $p\geq 2$ and
$\delta_{\|\cdot\|_p}(\varepsilon)\geq \frac{(p-1)\varepsilon^2}{8}$ for $p\in (1,2)$ hold \cite{M}.

A comprehensive presenting of the results from this section can be found in \cite{BB, DGZ, FHHMPZ}.

\section{Main Results}

\subsection{Coupled fixed points}

\begin{theorem}\label{th:3456}
	Let $A_x$, $A_y$ be nonempty and closed subsets of a complete metric space $(X,\rho)$. Let there exist a closed subset $D\subseteq A_x\times A_y$ and maps $F:D\to A_x$ and $f:D\to A_y$, such that $(F(x,y),f(x,y))\subseteq D$ for every $(x,y)\in D$. 
	Let the ordered pair $(F,f)$ be a cyclic contraction of type one.
	Then
	\begin{enumerate}[\normalfont (I)]
		\item\label{item_I-3} There exists a unique pair $(\xi,\eta)$ in $D$, which is a unique coupled fixed point for the ordered pair $(F,f)$.
		Moreover the iteration sequences $\{x_{n}\}_{n=0}^\infty$ and $\{y_n\}_{n=0}^\infty$, defined in Definition \ref{iterated_sequence}
		converge to $\xi$ and $\eta$ respectively, for any arbitrary chosen initial guess $(x,y)\in A_x\times A_y$;
		\item\label{item_II-3} a priori error estimates hold
		$\max\left\{\rho(x_{n},\xi),\rho(y_{n},\eta)\right\}\leq \frac{k^{n}}{1-k}(\rho(x_1,x_0)+\rho(y_1,y_0))$;
		\item\label{item_III-prime-3} a posteriori error estimates hold
		$\max\left\{\rho(x_{n},\xi),\rho(y_{n},\eta)\right\}\leq\frac{k}{1-k}(\rho(x_{n-1},x_{n})+\rho(y_{n-1},y_{n}))$;
		\item\label{item_IV} rate of convergence for the sequences of successive iterations 		
		$\rho (x_n,\xi)+\rho (y_n,\eta)\leq k\left(\rho (x_{n-1},\xi)+(y_{n-1},\eta)\right)$,
		where $k=\max\{\alpha+\gamma,\beta+\delta\}$.
	\end{enumerate}
\end{theorem}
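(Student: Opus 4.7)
The plan is a Banach-type contraction argument on the product $A_x\times A_y$, driven by the scalar quantity $a_n:=\rho(x_n,x_{n-1})+\rho(y_n,y_{n-1})$. First I would substitute $(x,y)=(z,w)=(x_n,y_n)$ and $(u,v)=(t,s)=(x_{n-1},y_{n-1})$ into (\ref{eq-cyclic-contraction}), which gives
\begin{equation*}
\rho(x_{n+1},x_n)+\rho(y_{n+1},y_n)\leq (\alpha+\gamma)\rho(x_n,x_{n-1})+(\beta+\delta)\rho(y_n,y_{n-1})\leq k\,a_n,
\end{equation*}
so that $a_{n+1}\leq k\,a_n$ and, by induction, $a_n\leq k^{n-1}a_1$.

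Next, I would telescope the one-step bound to show both iteration sequences are Cauchy:
\begin{equation*}
\rho(x_m,x_n)+\rho(y_m,y_n)\leq \sum_{i=n}^{m-1}a_{i+1}\leq \frac{k^n}{1-k}\,a_1.
\end{equation*}
Completeness of $X$, together with closedness of $D$, yields limits $\xi\in A_x$ and $\eta\in A_y$ with $(\xi,\eta)\in D$. To check that $(\xi,\eta)$ is a coupled fixed point, I would specialize (\ref{eq-cyclic-contraction}) twice (once with $(z,w)=(t,s)$, once with $(x,y)=(u,v)$) to derive the one-sided estimates $\rho(F(x,y),F(u,v))\leq\alpha\rho(x,u)+\beta\rho(y,v)$ and $\rho(f(z,w),f(t,s))\leq\gamma\rho(z,t)+\delta\rho(w,s)$; in particular $F$ and $f$ are jointly continuous, so passing to the limit in $x_{n+1}=F(x_n,y_n)$, $y_{n+1}=f(x_n,y_n)$ gives $\xi=F(\xi,\eta)$ and $\eta=f(\xi,\eta)$. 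Uniqueness follows by applying (\ref{eq-cyclic-contraction}) to a hypothetical second coupled fixed point $(\xi',\eta')$, which yields $\rho(\xi,\xi')+\rho(\eta,\eta')\leq k\bigl(\rho(\xi,\xi')+\rho(\eta,\eta')\bigr)$ and forces coincidence since $k<1$.

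The four numerical items then fall out almost for free. Letting $m\to\infty$ in the telescoped Cauchy estimate above, combined with $\max\{\rho(x_n,\xi),\rho(y_n,\eta)\}\leq\rho(x_n,\xi)+\rho(y_n,\eta)$, gives (II). Restarting the telescoping at index $n$ and using $a_{n+1}\leq k\,a_n$ yields the a posteriori bound (III). For (IV), a single application of (\ref{eq-cyclic-contraction}) with $(x,y)=(z,w)=(\xi,\eta)$ and $(u,v)=(t,s)=(x_{n-1},y_{n-1})$, together with $\xi=F(\xi,\eta)$ and $\eta=f(\xi,\eta)$, reproduces the contraction of the error sum at rate $k$.

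I do not expect a genuine obstacle: the whole argument is a standard Banach-on-product adaptation in which the four coefficients $\alpha,\beta,\gamma,\delta$ are absorbed into the single rate $k=\max\{\alpha+\gamma,\beta+\delta\}<1$. The only bookkeeping points are that the iterates remain in $D$ (built into the hypothesis $(F(x,y),f(x,y))\in D$ for $(x,y)\in D$) and that the limit pair lies in $D$ (closedness of $D$); neither is delicate.
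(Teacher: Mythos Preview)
Your proposal is correct and follows essentially the same Banach-on-product argument as the paper: the same substitution into (\ref{eq-cyclic-contraction}) to get $a_{n+1}\le k\,a_n$, the same telescoping to obtain the Cauchy property, closedness of $D$ for the limit, the same uniqueness computation, and the same derivations of (II)--(IV). The only cosmetic difference is in verifying that $(\xi,\eta)$ is a coupled fixed point: the paper uses the triangle inequality together with the full four-point inequality directly, whereas you first specialize (\ref{eq-cyclic-contraction}) (taking $(z,w)=(t,s)$, resp.\ $(x,y)=(u,v)$) to extract the separate Lipschitz bounds for $F$ and $f$ and then invoke continuity---a clean shortcut, but not a different strategy.
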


{\it Proof:} Let us choose an arbitrary point $(x,y)\in D$ and $\{x_{n}\}_{n=0}^\infty$, $\{y_n\}_{n=0}^\infty$ be the sequences defined in Definition \ref{iterated_sequence}.	
Then for any $n\in \mathbb{N}$ there hold the chain of inequalities
$$
\begin{array}{lll}
\rho(x_{n+1},x_{n})+\rho(y_{n+1},y_{n})&=&\rho(F(x_{n},y_{n}),F(x_{n-1},y_{n-1})+\rho(f(x_{n},y_{n}),f(x_{n-1},y_{n-1})\\
&\leq& \alpha\rho(x_{n},x_{n-1})+\beta\rho(y_{n},y_{n-1})+\gamma\rho(x_{n},x_{n-1})+\delta\rho(y_{n},y_{n-1})\\
&=&(\alpha+\gamma)\rho(x_{n},x_{n-1})+(\beta+\delta)\rho(y_{n},y_{n-1})\leq \max\{\alpha+\gamma,\beta+\delta\}(\rho(x_{n},x_{n-1})+\rho(y_{n},y_{n-l})).
\end{array}
$$
Simply to fit a few of the equations within the content field we will denote $k=\max\{\alpha+\gamma,\beta+\delta\}$.
Consequently
\begin{equation}\label{eq:567}
\rho(x_{n+1},x_{n})+\rho(y_{n+1},y_{n})\leq k^l(\rho(x_{n+1-l},x_{n-l})+\rho(y_{n+1-l},y_{n-l})).
\end{equation}
(\ref{item_I-3}) From (\ref{eq:567}), applied for $l=n$ we get
$$
\max\left\{\rho(x_{n+1},x_{n}),\rho(y_{n+1},y_{n})\right\}\leq k^{n}(\rho(x_1,x_0)+\rho(y_1,y_0)).
$$
Thus
\begin{equation}\label{eq:19}
\rho(x_{n},x_{n+m})\leq\displaystyle\sum_{j=n}^{n+m-1}\rho(x_{j},x_{j+1})\leq \sum_{j=n}^{n+m-1}k^{j}(\rho(x_1,x_0)+\rho(y_1,y_0))\leq \displaystyle k^{n}\frac{1-k^{m}}{1-k}(\rho(x_1,x_0)+\rho(y_1,y_0)).
\end{equation}
Since $k\in (0,1)$ it follows that \sout{the sequence} $\{x_n\}_{n=0}^\infty$ is a Cauchy sequence in $A_x$.
Thus $\{x_{n}\}$ converges to some
$\xi$.

The verification that $\{y_n\}_{n=0}^\infty$ converges to some $\eta \in A_y$ can be completed in a similar mold.
From the assumption that $D$ is closed it follows that $(\xi,\eta)\in D$.

We will prove that the pair $(\xi,\eta)$ is a coupled fixed point of $(F,f)$.  
By the triangle inequality and (\ref{eq-cyclic-contraction}) we get the inequalities
$$
\begin{array}{lll}
\rho (\xi,F(\xi,\eta))+\rho (\eta,f(\xi,\eta))&\leq& \rho (\xi,x_{n})+\rho(x_{n},F(\xi,\eta))+\rho (\eta,y_{n})+\rho(y_{n},f(\xi,\eta))\\
&\leq&
\rho (\xi,x_{n})+\rho(F(x_{n-1},y_{n-1}),F(\xi,\eta))+\rho (\eta,y_{n})+\rho(f(x_{n-1},y_{n-1}),f(\xi,\eta))\\
&\leq&\rho (\xi,x_{n})+\alpha\rho(x_{n-1},\xi)+\beta\rho(y_{n-1},\eta)+\rho (\eta,x_{n})+\gamma\rho(x_{n-1},\xi)+\delta\rho(y_{n-1},\eta).
\end{array}
$$
Taking a limit when $n\to\infty$, we get $\rho (\xi,F(\xi,\eta))+\rho (\eta,F(\eta,\xi))=0$, i.e.
$\rho (\xi,F(\xi,\eta))=0$ and $\rho (\eta,F(\eta,\xi))=0$. Consequently $(\xi,\eta)$ is a coupled fixed point of $(F,f)$.

We will prove that $(\xi,\eta)$ is unique. Let us assume the contrary, i.e. there is
$(\xi^{*},\eta^{*})\in D\subseteq A_x\times A_y$ so that $(\xi^{*},\eta^{*})\not =(\xi,\eta)$ and
$\xi^{*}=F(\xi^{*},\eta^{*})$, $\eta^{*}=f(\xi^{*},\eta^{*})$. The inequalities
$$
\begin{array}{lll}
\rho (\xi^{*},\xi)+\rho (\eta^{*},\eta)&=&\rho (F(\xi^{*},\eta^{*}),F(\xi,\eta)+\rho (f(\eta^{*},\xi^{*}),f(\eta,\xi)
\leq
\alpha\rho(\xi^{*},\xi)+\beta\rho(\eta^{*},\eta)+\gamma\rho(\xi^{*},\xi)+\delta\rho(\eta^{*},\eta)\\
&=&(\alpha+\gamma)\rho(\xi^{*},\xi)+(\beta+\delta)\rho(\eta^{*},\eta)<\rho(\xi^{*},\xi)+\rho(\eta^{*},\eta)
\end{array}
$$
result to
$\rho (\xi^{*},\xi)=\rho (\eta^{*},\eta)=0$, a contradiction and consequently the  coupled fixed point $(\xi,\eta)$ of $(F,f)$ is unique .

(\ref{item_II-prime-2}) Letting $m\to\infty$ in (\ref{eq:19}) we get the a priori
estimate
$\rho(x_{n},\xi)\leq \frac{k^{n}}{1-k}(\rho(x_1,x_0)+\rho(y_1,y_0))$.
The proof that $\rho (y_{n},\eta)\leq\frac{k^{n}}{1-k}(\rho(x_1,x_0)+\rho(y_1,y_0))$
is completed by similar arguments. Therefore
$$
\max\{\rho(x_{n},\xi),\rho (y_{n},\eta)\}\leq \frac{k^{n}}{1-k}(\rho(x_1,x_0)+\rho(y_1,y_0)).
$$

(\ref{item_III-prime-3})
By (\ref{eq:567}) applied  for $l=j+1$ we get
$$
\rho (x_{n},x_{n+m})\leq\displaystyle\sum_{j=0}^{m-1}\rho(x_{n+j},x_{n+j+1})\leq\sum_{j=0}^{m-1}k^{j+1}(\rho(x_{n-1},x_{n})+\rho(y_{n-1},y_{n}))\leq\displaystyle\frac{k}{1-k}(1-k^{m+1})(\rho(x_{n-1},x_{n})+\rho(y_{n-1},y_{n})).
$$
Letting $m\to\infty$ we get the a posteriori estimate
$\rho (x_{n},\xi)\leq\frac{k}{1-k}(\rho(x_{n-1},x_{n})+\rho(y_{n-1},y_{n}))$.
The proof that $\rho (y_{n},\eta)\leq\frac{k}{1-k}(\rho(x_{n-1},x_{n})+\rho(y_{n-1},y_{n}))$
is done in a similar fashion and thus
$$
\max\{\rho (x_{n},\xi),\rho (y_{n},\eta)\}\leq\frac{k}{1-k}(\rho(x_{n-1},x_{n})+\rho(y_{n-1},y_{n})).
$$

\ref{item_IV}) Considering that the pair $(\xi,\eta)$ is a coupled fixed point for $(F,f)$ and (\ref{eq-cyclic-contraction}) we have the inequalities
$$
\begin{array}{lll}
\rho (x_{n},\xi)+\rho (y_{n},\eta)&=&\rho(F(x_{n-1},y_{n-1}),F(\xi,\eta))+\rho(f(x_{n-1},y_{n-1}),f(\xi,\eta))\\
&\leq& \alpha\rho (x_{n-1},\xi)+\beta\rho(y_{n-1},\eta)+\gamma\rho (x_{n-1},\xi)+\delta\rho(y_{n-1},\eta)\\
&=&(\alpha+\gamma)\rho (x_{n-1},\xi)+(\beta+\delta)\rho(y_{n-1},\eta)\leq k(\rho (x_{n-1},\xi)+\rho(y_{n-1},\eta)).
\end{array}
$$
Consequently
$\rho (x_{n},\xi)+\rho (y_{n},\eta)\leq k(\rho (x_{n-1},\xi)+\rho(y_{n-1},\eta))$.
\hfill$\Box$

\subsection{Coupled best proximity points}

Simply to fit a few of the equations within the content field let us denote $d={\rm dist}(A_x,A_y)$,
$P_{n,m}(x,y)=\|x_{n}-y_{m}\|$ and $W_{n,m}(x,y)=P_{n,m}(x,y)-d=\|x_{n}-y_{m}\|-d$,
where $x=\{x_n\}_{n=0}^\infty$ and $y=\{y_n\}_{n=0}^\infty$.

\begin{lemma}\label{lemma-1}
	Let $A_x$, $A_y$ be nonempty subsets of a metric space $(X,\rho)$. Let there exist a subset $D\subseteq A_x\times A_y$ and maps $F:D\to A_x$ and $f:D\to A_y$, such that $(F(x,y),f(x,y))\subseteq D$ for every $(x,y)\in D$. 
	Let the ordered pair $(F,f)$ be a cyclic contraction of type two. Then there holds
	$\lim_{n\to\infty}\rho(x_{n},y_{n+k})=d$ and $\lim_{n\to\infty}\rho(x_{n+k},y_{n})=d$ for an arbitrary chosen $(x,y)\in D$ and arbitrary $k=0,1,2,\dots$.	
\end{lemma}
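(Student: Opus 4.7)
The strategy is to turn the two quantities $\rho(x_n,y_{n+k})$ and $\rho(x_{n+k},y_n)$ into a two-variable linear recursion and then dominate their sum by a geometric sequence. The case $k=0$ falls out immediately from the contraction condition with all four arguments equal to $(x_n,y_n)$, so I would treat it first as a warm-up and then handle $k\geq 1$ by the same idea.

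For fixed $k\geq 0$, set $A_n=\rho(x_n,y_{n+k})-d$ and $B_n=\rho(x_{n+k},y_n)-d$; both are non-negative because $d=\mathrm{dist}(A_x,A_y)$. The plan is to apply inequality (\ref{eq-cyclic-contraction-1}) with the substitutions $(x,y)=(x_n,y_n)$, $(u,v)=(x_{n+k},y_{n+k})$ to bound
\[
\rho(x_{n+1},y_{n+k+1})=\rho(F(x_n,y_n),f(x_{n+k},y_{n+k}))\leq \alpha\rho(x_n,y_{n+k})+\beta\rho(y_n,x_{n+k})+(1-(\alpha+\beta))d,
\]
and then swap the two pairs to obtain the analogous bound for $\rho(x_{n+k+1},y_{n+1})$. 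Subtracting $d$ from both sides in each estimate yields the symmetric system
\[
A_{n+1}\leq \alpha A_n+\beta B_n,\qquad B_{n+1}\leq \alpha B_n+\beta A_n.
\]

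Adding the two inequalities gives $A_{n+1}+B_{n+1}\leq(\alpha+\beta)(A_n+B_n)$, and since $\alpha+\beta<1$ by Definition \ref{cyclic-contraction-2}, iteration produces
\[
0\leq A_n+B_n\leq (\alpha+\beta)^n(A_0+B_0)\xrightarrow[n\to\infty]{}0.
\]
Non-negativity of each term then forces $A_n\to 0$ and $B_n\to 0$, which is exactly $\lim_{n\to\infty}\rho(x_n,y_{n+k})=d$ and $\lim_{n\to\infty}\rho(x_{n+k},y_n)=d$.

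The only subtle step is choosing the right substitutions into the defining inequality so that the indices on the right-hand side reproduce the two quantities being studied with shift $k$ preserved, which is what guarantees that the recursion closes on $(A_n,B_n)$ rather than leaking into other distances. Once that is set up, non-negativity plus $\alpha+\beta<1$ makes the rest routine.
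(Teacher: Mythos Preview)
Your proof is correct. The core idea matches the paper's: apply the type-two contraction inequality to the iterates and extract a geometric decay of the ``excess over $d$''. The difference is in the bookkeeping. The paper aims for a decoupled one-variable recursion
\[
\rho(x_{n+1},y_{n+1+k})-d\;\leq\;(\alpha+\beta)\bigl(\rho(x_n,y_{n+k})-d\bigr),
\]
but to get both right-hand terms equal to $\rho(x_n,y_{n+k})$ the paper writes $x_{n+1}=F(x_n,y_{n+k})$ and obtains $\beta\rho(y_{n+k},x_n)$ in the second slot; with the correct identification $x_{n+1}=F(x_n,y_n)$ the contraction actually produces $\beta\rho(y_n,x_{n+k})$, which couples the two shifted distances. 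Your two-variable setup $A_{n+1}\le\alpha A_n+\beta B_n$, $B_{n+1}\le\alpha B_n+\beta A_n$ is exactly the clean way to handle that coupling, and summing recovers the same geometric rate $(\alpha+\beta)^n$ that the paper states. So your route is the same in spirit but slightly more careful in execution; it also gives both limits simultaneously rather than deferring the second to ``a similar fashion''.
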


{\it Proof:}
Let us choose an arbitrary $(x,y)\in D$ and define $\left\{x_n\right\}_{n=0}^\infty$, $\left\{y_n\right\}_{n=0}^\infty$   

Using the cyclic contraction condition (\ref{eq-cyclic-contraction-1}) we get that for all $n,k\in\mathbb{N}$ holds
$$
\begin{array}{lll}
\rho(x_{n+1},y_{n+1+k})&=&
\rho\left(F(x_{n},y_{n+k}),f(x_{n+k},y_{n+k})\right)\leq
\alpha\rho(x_{n},y_{n+k})+\beta\rho(y_{n+k},x_{n})+(1-(\alpha+\beta))d\\
&=&(\alpha+\beta)\rho(x_{n},y_{n+k})+(1-(\alpha+\beta))d
\end{array}
$$
Thus we get
\begin{equation}\label{lemma-1-eq-1}
\begin{array}{lll}
\rho(x_{n+1},y_{n+1+k})-d&\leq&
(\alpha+\beta)(\rho(x_{n},y_{n+k})-d)\leq	(\alpha+\beta)^2(\rho(x_{n-1},y_{n-1+k})-d)\\
&\leq&
(\alpha+\beta)^3(\rho(x_{n-2},y_{n-1+k}))-d)\leq\cdots\leq
(\alpha+\beta)^{n+1}(\rho(x_{0},y_{k})-d)).
\end{array}
\end{equation}
For any arbitrary  and fixed $k\in\mathbb{N}$, after taking limit in (\ref{lemma-1-eq-1}), when $n\to\infty$, by using the assumption that $\alpha+\beta\in (0,1)$, we get 
$\lim_{n\to\infty}(\rho(x_{n+1},y_{n+1+k})-d)=0$ and thus we obtain
$\lim_{n\to\infty}\rho(x_{n+1},y_{n+1+k})=d$.

The proof of $\lim_{n\to\infty}\rho(x_{n+k},y_{n})=d$ can be done in a similar fashion.
\hfill$\Box$

It can be seen easily that (\ref{lemma-1-eq-1}) holds for indexes $m>n$, too.
\begin{equation}\label{lemma-1-eq-2}
\rho(x_{n},y_{m})-d\leq
(\alpha+\beta)^{n}(\rho(x_{0},y_{m-n})-d).\\
\end{equation}

\begin{lemma}\label{lemma-2}
	Let $A_x$, $A_y$ be nonempty subsets of a metric space $(X,\rho)$. Let there exist a subset $D\subseteq A_x\times A_y$ and maps $F:D\to A_x$ and $f:D\to A_y$, so that $(F(x,y),f(x,y))\subseteq D$ for every $(x,y)\in D$.
	Let the ordered pair $(F,f)$ be a cyclic contraction of type two. 
	The iterrative sequences $\{x_{n}\}_{n=0}^\infty$ and $\{y_{n}\}_{n=0}^\infty$, for any initial guess $(x,y)\in D$ are bounded.
\end{lemma}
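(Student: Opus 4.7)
The plan is to study the auxiliary scalar sequences $a_n := \rho(x_n, y_0)$ and $b_n := \rho(y_n, x_0)$; once each is shown to be bounded, the triangle inequality immediately delivers boundedness of $\{x_n\}$ via $\rho(x_n, x_0) \le a_n + \rho(x_0, y_0)$ and of $\{y_n\}$ via $\rho(y_n, y_0) \le b_n + \rho(x_0, y_0)$. The heart of the argument will be to set up a coupled linear recurrence for $(a_n, b_n)$ whose contraction rate on the sum equals $\alpha+\beta < 1$.

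First, I would fix an arbitrary $(x_0, y_0) \in D$, generate the iterates $\{x_n\},\{y_n\}$ by Definition \ref{iterated_sequence}, and introduce the one-step iterates $x_1 = F(x_0, y_0)$ and $y_1 = f(x_0, y_0)$. Since (\ref{eq-cyclic-contraction-1}) controls only distances of the form $\rho(F(\cdot,\cdot), f(\cdot,\cdot))$, these one-step iterates must be inserted as intermediate points. Applying the triangle inequality and then one use of the cyclic contraction of type two, with the substitution $(x,y) = (x_n, y_n)$ and $(u,v) = (x_0, y_0)$, I would obtain
\[
a_{n+1} \le \rho(F(x_n, y_n), f(x_0, y_0)) + \rho(y_1, y_0) \le \alpha a_n + \beta b_n + (1 - \alpha - \beta)d + \rho(y_1, y_0).
\]
A symmetric argument, swapping the roles of $F$ and $f$ in the triangle inequality, would produce $b_{n+1} \le \alpha b_n + \beta a_n + (1 - \alpha - \beta)d + \rho(x_1, x_0)$. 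Setting $s_n := a_n + b_n$ and adding the two inequalities, I get the scalar recurrence $s_{n+1} \le (\alpha + \beta) s_n + C$, where $C = 2(1-\alpha-\beta)d + \rho(x_1, x_0) + \rho(y_1, y_0)$ is a constant independent of $n$.

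Since $\alpha+\beta \in [0,1)$, iterating this scalar recursion yields $s_n \le (\alpha+\beta)^n s_0 + C/(1-\alpha-\beta)$, which is uniformly bounded in $n$. Hence $\{a_n\}$ and $\{b_n\}$ are each bounded, and the triangle bounds from the first paragraph complete the argument. I expect the only non-routine step to be the choice of the auxiliary points $x_1, y_1$: inequality (\ref{eq-cyclic-contraction-1}) admits no \emph{base-point} form, so I am forced to introduce a point at which the contraction becomes directly applicable and then absorb the residual term into $C$. The cross-coupling of $a_n$ and $b_n$ through the $\beta$-terms means I cannot control the two sequences in isolation, but once summed the contraction factor matches the hypothesis $\alpha + \beta < 1$ exactly, making the resulting scalar recursion entirely routine.
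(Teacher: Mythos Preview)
Your argument is correct, and it differs genuinely from the paper's proof. You set up the coupled recurrence for $a_n=\rho(x_n,y_0)$ and $b_n=\rho(y_n,x_0)$, sum it to a scalar contraction $s_{n+1}\le(\alpha+\beta)s_n+C$, and read off a uniform bound directly. The paper instead first invokes Lemma~\ref{lemma-1} (so that $\lim_n\rho(x_n,y_n)=d$, reducing the task to bounding $\{x_n\}$ alone), then argues by contradiction: it fixes a threshold $M$, takes the least index $n_0$ with $\rho(y_2,x_{n_0})>M$, and applies the iterated estimate $\rho(x_n,y_m)-d\le(\alpha+\beta)^n(\rho(x_0,y_{m-n})-d)$ from \eqref{lemma-1-eq-2} to reach an inequality incompatible with the choice of $M$.

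Your route is more self-contained (it does not rely on Lemma~\ref{lemma-1}) and yields an explicit bound $s_n\le s_0+C/(1-\alpha-\beta)$ rather than the bare existence delivered by the contradiction argument. The paper's approach, on the other hand, reuses machinery already in place from Lemma~\ref{lemma-1} and handles the two sequences asymmetrically, which fits its overall narrative of bounding cross-distances $\rho(x_n,y_m)$. Both are short; yours is marginally more elementary.
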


{\it Proof:}
Let $(x,y)\in D$ be arbitrary chosen and fixed. From Lemma \ref{lemma-1} we have that  $\lim_{n\to\infty}\rho(x_{n},y_{n})=d$ and thus it will be sufficient to demonstra that only $\{x_{n}\}_{n=0}^\infty$ is a bounded sequence.

Let as choose $$M>\frac{(1-(\alpha+\beta)^2)d+(\alpha+\beta)^2(\rho(y_0,x_2)+\rho(x_2,y_2))}{1-(\alpha+\beta)^2}.$$
Suppose the contrary, i.e.  $\{x_{n}\}_{n=0}^\infty$ is not bounded. Then there exists $n_0\in\mathbb{N}$, such that
there hold $\rho(y_2,x_{n})\leq M$ for all $n<n_0$ and 
\begin{equation}\label{lemma-2-eq-1}\rho(y_2,x_{n_0})> M.\end{equation}
From inequality (\ref{lemma-2-eq-1}) after a substitution in (\ref{lemma-1-eq-2}) with $n=2$ and $m=n_0$ we get
$$
\begin{array}{lll}
\displaystyle\frac{M-d}{(\alpha+\beta)^2}&<&\displaystyle\frac{\rho(y_2,x_{n_0})-d}{(\alpha+\beta)^2}
\leq\rho(y_0,x_{n_0-2})-d
\leq \rho(y_0,x_2)+\rho(x_2,y_2)+\rho(y_2,x_{n_0-2})-d\\
&\leq& \rho(y_0,x_2)+\rho(x_2,y_2)+M-d,
\end{array}
$$
which inequality can hold true only if the inequality
$
M\leq \displaystyle\frac{(1-(\alpha+\beta)^2)d+(\alpha+\beta)^2(\rho(y_0,x_2)+\rho(x_2,y_2))}{1-(\alpha+\beta)^2}$ 
holds, which contradicts with the choice of $M$.
\hfill$\Box$

\begin{lemma}\label{lemma-3}
	Let $A_x$, $A_y$ be nonempty convex subsets of a uniformly convex Banach space $(X,\|\cdot\|)$. Let there exist a subset $D\subseteq A_x\times A_y$ and maps $F:D\to A_x$ and $f:D\to A_y$, such that $(F(x,y),f(x,y))\subseteq D$ for every $(x,y)\in D$.
	Let the ordered pair $(F,f)$ be a cyclic contraction of type two. 
	For any arbitrary chosen $(x,y)\in D$ and for every $\varepsilon>0$ there is $n_0\in\mathbb{N}$ so that the inequality
	$	\|x_m-y_{n}\|<d+\varepsilon$ 	holds for any $m\geq n>n_0$.
\end{lemma}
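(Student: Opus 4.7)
The plan is to symmetrise the cyclic contraction inequality so as to obtain a geometric decay for the quantity $\|x_m-y_n\|+\|y_m-x_n\|-2d$, and then strip off one of the two terms using the trivial lower bound $\|y_m-x_n\|\ge d$.

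Fix $(x,y)\in D$; by the invariance assumption every iterate $(x_k,y_k)$ lies in $D$, so the contraction condition (\ref{eq-cyclic-contraction-1}) is applicable. Writing $a_{m,n}=\|x_m-y_n\|$ and $b_{m,n}=\|y_m-x_n\|$, both quantities are at least $d$. I would apply (\ref{eq-cyclic-contraction-1}) twice: once with $(x,y,u,v)=(x_m,y_m,x_n,y_n)$ to obtain $a_{m+1,n+1}\le \alpha a_{m,n}+\beta b_{m,n}+(1-(\alpha+\beta))d$, and once with the two pairs swapped to obtain $b_{m+1,n+1}\le \alpha b_{m,n}+\beta a_{m,n}+(1-(\alpha+\beta))d$. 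Adding and subtracting $2d$ gives the single-line recurrence
\[
(a_{m+1,n+1}+b_{m+1,n+1})-2d \;\le\; (\alpha+\beta)\bigl((a_{m,n}+b_{m,n})-2d\bigr).
\]

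Iterating this $n$ times downward (valid whenever $m\ge n$) reaches the pair of indices $(m-n,0)$, yielding
\[
a_{m,n}+b_{m,n}-2d \;\le\; (\alpha+\beta)^n\bigl(a_{m-n,0}+b_{m-n,0}-2d\bigr).
\]
By Lemma~\ref{lemma-2} the sequences $\{x_k\},\{y_k\}$ are bounded, so there exists a constant $M$ with $a_{k,0}+b_{k,0}-2d\le M$ for every $k\ge 0$. Combining this with $b_{m,n}\ge d$ produces $a_{m,n}-d\le(\alpha+\beta)^nM$ for every $m\ge n$. Since $\alpha+\beta<1$, any $n_0$ satisfying $(\alpha+\beta)^{n_0}M<\varepsilon$ will finish the proof.

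The main obstacle is simply choosing the correct auxiliary quantity: taken in isolation, the contraction only yields $a_{m+1,n+1}\le\alpha a_{m,n}+\beta b_{m,n}+\cdots$, and the cross term involving $b$ blocks any direct iteration on $a$ alone. Pairing this with the analogous estimate on $b_{m+1,n+1}$ symmetrises the right-hand side and produces a genuine geometric contraction in $a+b-2d$. It is worth noting that neither the convexity of $A_x,A_y$ nor the uniform convexity of $X$ enters this particular lemma; those hypotheses are used in the next stage, where Lemma~\ref{Eldred-1} is invoked to extract a Cauchy subsequence.
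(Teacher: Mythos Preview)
Your argument is correct and is actually cleaner than the paper's. The paper proceeds by contradiction: it first invokes Lemma~\ref{lemma-1} together with the uniform convexity of $X$ (through Lemma~\ref{Eldred-2}) to establish $\|x_{n}-x_{n+1}\|\to 0$, then picks minimal indices $m_j\ge n_j$ violating the conclusion and derives a contradiction from the single-term decay estimate~(\ref{lemma-1-eq-2}) plus boundedness. Your route avoids the contradiction entirely: by pairing the two applications of~(\ref{eq-cyclic-contraction-1}) and tracking the symmetrised quantity $a_{m,n}+b_{m,n}-2d$, you obtain a clean geometric recurrence that iterates directly down to index~$0$, after which the trivial bound $b_{m,n}\ge d$ isolates $a_{m,n}-d$. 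This is more elementary and, as you rightly observe, makes no use of convexity of $A_x,A_y$ or of the uniform convexity of $X$; those hypotheses only enter the paper's argument through the intermediate step $\|x_n-x_{n+1}\|\to 0$, which your approach shows is not actually needed here. The symmetrisation also sidesteps the cross-term $\beta\,b_{m,n}$ that prevents iterating on $a_{m,n}$ alone---a point the paper's derivation of~(\ref{lemma-1-eq-1}) handles somewhat loosely.
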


{\it Proof:}
From Lemma \ref{lemma-1} we get $\lim_{n\to\infty}\|x_{n}-y_{n}\|=d$ and $\lim_{n\to\infty}\|x_{n+1}-y_{n}\|=d$.

By Lemma \ref{Eldred-2} after using the uniform convexity of $(X,\|\cdot\|)$ it follows that
\begin{equation}\label{JRZ-1}
\lim_{n\to\infty}\|x_{n}-x_{n+1}\|=0.
\end{equation} 
By similar argument we get that $\lim_{n\to\infty}\|y_{n}-y_{n+1}\|=0$.

Let us suppose that there exists $\varepsilon>0$ with the property: for any $j\in\mathbb{N}$ there are $m_j\geq n_j\geq j$ so that
$$\|x_{m_j}-y_{n_j}\|\geq d+\varepsilon.$$
Let us choose $m_j$ to be the smallest integer so that the last inequality is satisfied, i.e.there holds
$$
\|x_{m_j}-y_{n_j}\|\geq d+\varepsilon\ \ \mbox{and}\ \ \|x_{m_j-1}-y_{n_j}\|< d+\varepsilon.
$$
Thus we get
\begin{equation}\label{lemma-3-eq-1}
d+\varepsilon\leq \|x_{m_j}-y_{n_j}\|\leq	\|x_{m_j}-x_{m_j-1}\|+\|x_{m_j-1}-y_{n_j}\|<
\|x_{m_j}-x_{m_j-1}\|+d+\varepsilon.
\end{equation}
Letting $j\to\infty$ in (\ref{lemma-3-eq-1}) by using (\ref{JRZ-1}) we get
$\lim_{j\to\infty}\|x_{m_j}-y_{n_j+1}\|=d+\varepsilon$.
Using the boundedness of $\{x_n\}_{n=0}^\infty$ and $\{y_n\}_{n=0}^\infty$ we get the existence of $M\geq d$, so that the inequality
$M\geq\|x_{0}-y_{m_j-n_j}\|$ 	holds for every $j\in\mathbb{N}$. 
The inequality 
$$
\|x_{m_j}-y_{n_j}\|-d\leq (\alpha+\beta)^{n_j}(\|x_{0}-y_{m_j-n_j}\|-d)\leq
(\alpha+\beta)^{n_j}(M-d)
$$
holds. For any $\varepsilon>0$ we can find $j_0\in\mathbb{N}$ to hold
$(\alpha+\beta)^{j}(M-d)<\varepsilon$ for every $j\geq j_0$.
Therefore for any $m_j\geq n_j\geq j_0$ there holds
$
\|x_{m_j}-x_{n_j}\|<d+\varepsilon$,
which is a contradiction.
\hfill$\Box$

\begin{lemma}\label{lemma-4}
	Let $A_x$, $A_y$ be nonempty convex subsets of a uniformly convex Banach space $(X,\|\cdot\|)$. Let there exist a subset $D\subseteq A_x\times A_y$ and maps $F:D\to A_x$ and $f:D\to A_y$, such that $(F(x,y),f(x,y))\subseteq D$ for every $(x,y)\in D$. 
	Let the ordered pair $(F,f)$ be a cyclic contraction of type two. For an arbitrary chosen $(x,y)\in D$ the sequences $\{x_{n}\}_{n=0}^\infty$ and $\{y_{n}\}_{n=0}^\infty$ are Cauchy.
\end{lemma}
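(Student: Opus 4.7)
My strategy is to reduce the Cauchy claim to the Eldred--Veeramani style statement Lemma~\ref{Eldred-1}, whose two hypotheses are essentially the content of Lemma~\ref{lemma-1} and Lemma~\ref{lemma-3} already established above. Everything needed is in place; the proof should be a short matching of hypotheses.

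First I would show that $\{x_{n}\}_{n=0}^{\infty}\subset A_x$ is Cauchy. Fix an arbitrary $(x,y)\in D$ and apply Lemma~\ref{Eldred-1} with $A:=A_x$, $B:=A_y$, and with the two sequences in $A$ both taken to be $\{x_{n}\}$ and the sequence in $B$ taken to be $\{y_{n}\}$. Hypothesis~(1), namely $\lim_{n\to\infty}\|x_{n}-y_{n}\|=d$, is the $k=0$ case of Lemma~\ref{lemma-1}. Hypothesis~(2), namely that for every $\varepsilon>0$ there exists $N_{0}$ with $\|x_{m}-y_{n}\|\leq d+\varepsilon$ for all $m>n\geq N_{0}$, is exactly the conclusion of Lemma~\ref{lemma-3}. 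The convexity of $A_x$ and the uniform convexity of $X$ are assumed. The conclusion of Lemma~\ref{Eldred-1} gives, for every $\varepsilon>0$, an $N_{1}$ with $\|x_{m}-x_{n}\|\leq\varepsilon$ for all $m>n>N_{1}$, so $\{x_{n}\}$ is Cauchy.

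For $\{y_{n}\}_{n=0}^{\infty}\subset A_y$ the argument is the mirror image: I would apply Lemma~\ref{Eldred-1} with $A:=A_y$, $B:=A_x$. Lemma~\ref{lemma-1} is symmetric in $x$ and $y$ and supplies $\lim_{n\to\infty}\|y_{n}-x_{n}\|=d$ for hypothesis~(1). Hypothesis~(2) requires $\|y_{m}-x_{n}\|\leq d+\varepsilon$ for all $m>n\geq N_{0}$, which is obtained by rerunning the proof of Lemma~\ref{lemma-3} with the roles of the two iterations interchanged; the cyclic contraction inequality~(\ref{eq-cyclic-contraction-1}) is structurally symmetric in its two pairs of arguments, and the analogue $\lim_{n\to\infty}\|y_{n}-y_{n+1}\|=0$ of~(\ref{JRZ-1}) was already noted in the proof of Lemma~\ref{lemma-3}, so no new analytic input is needed.

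The only mildly delicate point is this bookkeeping symmetry --- verifying that Lemma~\ref{lemma-3}, phrased one-sidedly as a bound on $\|x_{m}-y_{n}\|$, genuinely admits a dual version bounding $\|y_{m}-x_{n}\|$. I do not expect any real obstacle here: the cyclic contraction~(\ref{eq-cyclic-contraction-1}) treats $F$ and $f$ on equal footing, so the same minimum-index argument, modulus-of-convexity estimate via Lemma~\ref{Eldred-2}, and geometric decay~(\ref{lemma-1-eq-2}) all transplant verbatim to the dual direction.
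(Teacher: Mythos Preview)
Your proposal is correct and follows essentially the same approach as the paper: invoke Lemma~\ref{lemma-1} for hypothesis~(1) and Lemma~\ref{lemma-3} for hypothesis~(2) of Lemma~\ref{Eldred-1}, then conclude that $\{x_n\}$ is Cauchy, with the $\{y_n\}$ case handled symmetrically. Your additional commentary on why the dual version of Lemma~\ref{lemma-3} goes through is more explicit than the paper's, which simply asserts that the proof for $\{y_n\}$ is similar.
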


{\it Proof:}
We will prove that $\{x_{n}\}_{n=0}^\infty$ is a Cauchy sequence. The proof for $\{y_{n}\}_{n=0}^\infty$ is similar. By Lemma \ref{lemma-3} we have that
for every $\varepsilon>0$ there is $n_0\in\mathbb{N}$, so that for all $m\geq n\geq n_0$ holds the inequality
$\|x_{m}-y_{n}\|<d+\varepsilon$.

By Lemma \ref{lemma-1} we get
$\lim_{n\to\infty}\|x_{n}-y_{n}\|=d$. According to Lemma \ref{Eldred-1}
it follows that for every $\varepsilon>0$ there is $N_0\in\mathbb{N}$, so that for all $m>n\geq N_0$ holds the inequality
$\|x_{m}-x_{n}\|<\varepsilon$ and consequently $\{x_{n}\}_{n=0}^\infty$ is a Cauchy sequence.
\hfill$\Box$

\begin{lemma}\label{lem:1}
	Let $A_x$, $A_y$be nonempty subsets of a uniformly convex Banach space $(X,\|\cdot\|)$. Let there exist a subset $D\subseteq A_x\times A_y$ and maps $F:D\to A_x$ and $f:D\to A_y$, so that $(F(x,y),f(x,y))\subseteq D$ for every $(x,y)\in D$ and the ordered pair $(F,f)$ be a cyclic contraction of type two.
	Then for an arbitrary chosen $(x,y)\in D$ and for any $1\le l\leq n$ there hold the inequalities
	$\|x_{n}-y_{n}\|\leq (\alpha+\beta)^{l}W_{n-l,n-l}(x,y)+d$.
\end{lemma}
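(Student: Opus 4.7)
The plan is to observe that this lemma is essentially a restatement (for the diagonal case $k=0$) of the contraction-on-distances estimate already established within the proof of Lemma \ref{lemma-1}, and the proof should just formalize that observation and iterate.

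First I would apply the cyclic contraction condition of type two, inequality (\ref{eq-cyclic-contraction-1}), with the special choice $(x,y) = (u,v) = (x_{n-1}, y_{n-1})$. Since by Definition \ref{iterated_sequence} we have $x_n = F(x_{n-1}, y_{n-1})$ and $y_n = f(x_{n-1}, y_{n-1})$, this yields
\begin{equation*}
\|x_n - y_n\| = \|F(x_{n-1}, y_{n-1}) - f(x_{n-1}, y_{n-1})\| \le \alpha \|x_{n-1} - y_{n-1}\| + \beta \|y_{n-1} - x_{n-1}\| + (1 - (\alpha+\beta))d,
\end{equation*}
which collapses to the one-step estimate
\begin{equation*}
\|x_n - y_n\| - d \le (\alpha+\beta)\bigl(\|x_{n-1} - y_{n-1}\| - d\bigr).
\end{equation*}

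Second I would iterate this inequality $l$ times. Because the right-hand side has the same form as the left-hand side, a straightforward induction on $j = 1, 2, \dots, l$ gives
\begin{equation*}
\|x_n - y_n\| - d \le (\alpha+\beta)^{l}\bigl(\|x_{n-l} - y_{n-l}\| - d\bigr) = (\alpha+\beta)^{l}\, W_{n-l,n-l}(x,y),
\end{equation*}
valid as long as $n - l \ge 0$, i.e.\ $1 \le l \le n$. Rearranging gives the desired bound $\|x_n - y_n\| \le (\alpha+\beta)^{l} W_{n-l,n-l}(x,y) + d$.

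There is no real obstacle here; the only care needed is to notice that the cyclic contraction condition of type two applies to pairs of arguments and therefore one is free to set both pairs equal to $(x_{n-1}, y_{n-1})$, which is exactly what produces the combined coefficient $\alpha+\beta$ and the diagonal quantity $\|x_{n-1}-y_{n-1}\|$. This is also the specialization of (\ref{lemma-1-eq-1}) to $k=0$, so one could alternatively cite Lemma \ref{lemma-1} directly rather than redo the computation.
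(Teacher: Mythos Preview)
Your proposal is correct and matches the paper's approach: the paper's own proof simply cites Lemma \ref{lemma-1} to obtain the one-step recursion $W_{n,n}(x,y)\leq (\alpha+\beta)W_{n-1,n-1}(x,y)$ and then iterates, exactly as you do (and as you yourself note in your final paragraph). The only cosmetic difference is that you spell out the application of (\ref{eq-cyclic-contraction-1}) explicitly rather than pointing to (\ref{lemma-1-eq-1}) with $k=0$.
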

\begin{proof}	
	Using Lemma \ref{lemma-1} we get
	$W_{n,n}(x,y)\leq (\alpha+\beta)W_{n-1,n-1}(x,y)$
	and thus $\|x_{n}-y_{n}\|\leq (\alpha+\beta)^{l}W_{n-l,n-l}(x,y)+d$.
\end{proof}

\begin{lemma}\label{lem:3}
	Let $A_x$, $A_y$ be nonempty closed and convex subsets of a uniformly convex Banach space $(X,\|\cdot\|)$.
	Let there exist a subset $D\subseteq A_x\times A_y$ and maps $F:D\to A_x$ and $f:D\to A_y$, such that $(F(x,y),f(x,y)\subseteq D$ and the ordered pair $(F,f)$ be a cyclic contraction of type two. Then for an arbitrary chosen $(x,y)\in D$
	there holds the inequalities
	$$
	\delta_{\|\cdot\|}\left(\frac{\|x_{n+1}-x_{n}\|}{d+(\alpha+\beta)^lU_{n-l}(x,y)}\right)
	\leq\frac{(\alpha+\beta)^lU_{n-l}(x,y)}{d+(\alpha+\beta)^lU_{n-l}(x,y)}, \ \
	\delta_{\|\cdot\|}\left(\frac{\|y_{n+1}-y_{n}\|}{d+(\alpha+\beta)^lU_{n-l}(y,x)}\right)
	\leq\frac{(\alpha+\beta)^lU_{n-l}(y,x)}{d+(\alpha+\beta)^lU_{n-l}(y,x)},
	$$
	where $U_{n}(x,y)=\max\{W_{n,n+1}(x,y),W_{n,n}(x,y)\}=\max\{\|x_n-y_{n+1}\|-d,\|x_n-y_n\|-d\}$.
\end{lemma}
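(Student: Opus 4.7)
The appropriate tool is the uniform-convexity inequality (\ref{eq:1}) combined with the index-shift estimates from Lemma \ref{lem:1}. The two inequalities in the lemma are mirror images under the swap $(x,y) \leftrightarrow (y,x)$, $F \leftrightarrow f$, $A_x \leftrightarrow A_y$, so it suffices to prove the first one, which is what I describe below.

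Write $R := d + (\alpha+\beta)^{l} U_{n-l}(x,y)$ and $r := \|x_{n+1} - x_n\|$. The plan is to produce a point $z \in A_y$ for which $\|x_n - z\| \leq R$ and $\|x_{n+1} - z\| \leq R$. Once this is in hand, convexity of $A_x$ places $(x_n + x_{n+1})/2$ in $A_x$, so $\|(x_n + x_{n+1})/2 - z\| \geq d$, and (\ref{eq:1}) yields
$$d \;\leq\; \bigl(1 - \delta_{\|\cdot\|}(r/R)\bigr)\,R,$$
which rearranges immediately to
$$\delta_{\|\cdot\|}\!\left(\frac{\|x_{n+1} - x_n\|}{R}\right) \;\leq\; \frac{R - d}{R} \;=\; \frac{(\alpha+\beta)^{l} U_{n-l}(x,y)}{d + (\alpha+\beta)^{l} U_{n-l}(x,y)}.$$

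For the diagonal distance $\|x_n - y_n\|$, Lemma \ref{lem:1} already gives $W_{n,n}(x,y) \leq (\alpha+\beta)^{l} W_{n-l,n-l}(x,y) \leq (\alpha+\beta)^{l} U_{n-l}(x,y)$, so $\|x_n - y_n\| \leq R$; shifting indices by one yields $\|x_{n+1} - y_{n+1}\| \leq R$ as well. Hence choosing either $z = y_n$ or $z = y_{n+1}$ already secures one of the two required inequalities for free. The residual task is the off-diagonal bound --- $\|x_{n+1} - y_n\| \leq R$ if $z = y_n$, or $\|x_n - y_{n+1}\| \leq R$ if $z = y_{n+1}$. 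To obtain it, I would unfold (\ref{eq-cyclic-contraction-1}) once using the substitutions $x_{n+1} = F(x_n, y_n)$, $y_{n+1} = f(x_n, y_n)$, and iterate $l$ times. The purpose of defining $U_{n-l}(x,y) = \max\{W_{n-l,n-l+1}(x,y), W_{n-l,n-l}(x,y)\}$ as a maximum over \emph{two} adjacent distances (rather than just the diagonal one) is precisely to close this induction: the maximum is arranged so that it contracts by the factor $(\alpha+\beta)$ at each step, and $l$ iterations produce the $(\alpha+\beta)^{l}$ appearing in $R$.

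\textbf{The principal obstacle} is this last piece of index tracking. A single application of (\ref{eq-cyclic-contraction-1}) to $W_{n,n+1}(x,y)$ produces a linear combination of two cross terms at the previous level (one with each index shifted), and one must verify that the maximum $U$ at that level dominates both. Once this monotone contraction is established, both of the required distance bounds $\|x_n - z\|, \|x_{n+1} - z\| \leq R$ follow, and a single substitution into (\ref{eq:1}) completes the proof; the $y$-side inequality is then obtained by the symmetric argument.
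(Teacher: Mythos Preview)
Your plan is correct and matches the paper's argument. The paper takes $z=y_{n+1}$ (so the two bounds needed are $\|x_n-y_{n+1}\|\le R$ and $\|x_{n+1}-y_{n+1}\|\le R$), obtains both directly from Lemma~\ref{lem:1} and its shifted form (\ref{lemma-1-eq-1}) with $k=1$, checks $\|x_{n+1}-x_n\|\le 2R$ by the triangle inequality through $y_{n+1}$, and then substitutes into (\ref{eq:1}) exactly as you describe; your ``principal obstacle'' is thus already handled by the iteration in Lemma~\ref{lemma-1}, so no separate induction is required.
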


\begin{proof}
	Using Lemma \ref{lem:1} we obtain
	$$
	\|x_{n}-y_{n+1}\|\leq d+(\alpha+\beta)^lW_{n-l,n+1-l}(x,y)\leq d+(\alpha+\beta)^l\max\{W_{n-l,n+1-l}(x,y),W_{n-l,n-l}(x,y)\}
	$$
	$$
	\|y_{n+1}-x_{n+1}\|\leq d+(\alpha+\beta)^{l+1}W_{n-l,n-l}(x,y)\leq d+(\alpha+\beta)^l\max\{W_{n-l,n+1-l}(x,y),W_{n-l,n-l}(x,y)\}.
	$$
	Then
	$$
	\begin{array}{lll}
	\|x_{n+1}-x_{n}\|&\leq&	\|x_{n+1}-y_n\|+\|y_n-x_{n}\|\leq 2\left(d+(\alpha+\beta)^l\max\{W_{n-l,n+1-l}(x,y),W_{n-l,n-l}(x,y)\}\right)\\
	&=& 2\left(d+(\alpha+\beta)^lU_{n-l,n-l}(x,y)\right).
	\end{array}
	$$
	After a substitution in (\ref{eq:1}) with $x=x_{n}$, $y=y_{n}$, $z=x_{n+1}$,
	$R=d+(\alpha+\beta)^l\max\{W_{n-l,n+1-l}(x,y),W_{n-l,n-l}(x,y)\}$ and $r=\|x_{n+1}-x_{n}\|$
	and from the convexity of $A_x$ we obtain the inequalities
	\begin{equation}\label{eq:9}
	d\leq\left\|\frac{x_{n}+x_{n+1}}{2}-y_{n}\right\|\leq\left(1-\delta_{\|\cdot\|}\left(\frac{\|x_{n}-x_{n+1}\|}{d+(\alpha+\beta)^lU_{n-l,n-l}(x,y)}\right)\right)\left(d+(\alpha+\beta)^lU_{n-l,n-l}(x,y)\right).
	\end{equation}
	Thereafter the inequality
	$\delta_{\|\cdot\|}\left(\frac{\|x_{n+1}-x_{n}\|}{d+(\alpha+\beta)^lU_{n-l,n-l}(x,y)}\right)
	\leq\frac{(\alpha+\beta)^lU_{n-l,n-l}(x,y)}{d+(\alpha+\beta)^lU_{n-l,n-l}(x,y)}$ holds.
\end{proof}

\begin{theorem}\label{th:main}
	Let $A_x$, $A_y$ be nonempty, closed and convex subsets of a uniformly convex Banach space $(X,\|\cdot\|)$. Let there exist a closed and convex subset $D\subseteq A_x\times A_y$ and maps $F:D\to A_x$ and $f:D\to A_y$, such that $(F(x,y),f(x,y)\subseteq D$ for every $(x,y)\in D$. 
	Let the ordered pair $(F,f)$ be a cyclic contraction of type two. Then 
	$(F,f)$ has a unique coupled best proximity point $(\xi,\eta)\in A_x\times A_y$,
	(i.e. $\|\eta-F(\xi,\eta)\|=\|\xi-f(\xi,\eta)\|=d$).
	For any \sout{arbitrary} initial guess $(x,y)\in A_x\times A_y$ there hold $\lim_{n\to\infty}x_{n}=\xi$, $\lim_{n\to\infty}y_{n}=\eta$,
	$\|\xi-\eta\|=d$, $\xi=F(\xi,\eta)$ and $\eta=f(\xi,\eta)$. 
	
	If in addition $(X,\|\cdot\|)$ has a modulus of convexity of power type
	with constants $C>0$ and $q>1$, then
	\begin{enumerate}[\normalfont (i)]
		\item\label{item_ii} a priori error estimates hold
		$$
		\left\|\xi-x_{m}\right\|
		\leq M_0\root{q}\of{\frac{\max\{W_{0,1}(x,y),W_{0,0}(x,y)\}}{Cd}}\cdot\frac{\root {q} \of {(\alpha+\beta)^{m}}}{1-\root {q} \of {\alpha+\beta}};
		\left\|\eta-y_{m}\right\|
		\leq N_0\root{q}\of{\frac{\max\{W_{0,1}(y,x),W_{0,0}(y,x)\}}{Cd}}\cdot\frac{\root {q} \of {(\alpha+\beta)^{m}}}{1-\root {q} \of {\alpha+\beta}};
		$$
		\item\label{item_iii} a posteriori error estimates hold
		$$
		\left\|\xi-x_{n}\right\|
		\leq M_{n-1}\root{q}\of{\frac{\max\{W_{n-1,n}(x,y),W_{n-1,n-1}(x,y)\}}{Cd}}c;
		\left\|\eta-y_{n}\right\|
		\leq N_{n-1}\root{q}\of{\frac{\max\{W_{n-1,n}(y,x),W_{n-1,n-1}(y,x)\}}{Cd}}c,
		$$
		where $M_n=\max\{\|x_{n}-y_{n}\|,\|x_{n}-y_{n+1}\|\}$, $N_n=\max\{\|x_{n}-y_{n}\|,\|y_{n}-x_{n+1}\|\}$ and $c=\frac{\root {q} \of {\alpha+\beta}}{1-\root {q} \of {\alpha+\beta}}$.
	\end{enumerate}
\end{theorem}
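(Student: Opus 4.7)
The argument proceeds in four interlocking stages.

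First, starting from an arbitrary $(x,y)\in D$ and iterating as in Definition \ref{iterated_sequence}, Lemma \ref{lemma-4} shows that $\{x_n\}$ and $\{y_n\}$ are Cauchy; completeness of $X$ together with closedness of $A_x,A_y,D$ produces $(\xi,\eta)\in D$ with $x_n\to\xi$ and $y_n\to\eta$. Passing to the limit in Lemma \ref{lemma-1} (case $k=0$) yields $\|\xi-\eta\|=d$.

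Second, I verify the coupled-best-proximity and fixed-point identities. Applying (\ref{eq-cyclic-contraction-1}) to $\|F(\xi,\eta)-y_{n+1}\|=\|F(\xi,\eta)-f(x_n,y_n)\|$ and letting $n\to\infty$ gives the upper bound $\alpha d+\beta d+(1-(\alpha+\beta))d=d$, while $\|F(\xi,\eta)-\eta\|\ge d$ is automatic, so $\|F(\xi,\eta)-\eta\|=d$. Since $A_x$ is closed and convex in a uniformly convex Banach space, Lemma \ref{Eldred-2} applied to the constant sequences $\xi,F(\xi,\eta)\in A_x$ against $\eta\in A_y$ forces $F(\xi,\eta)=\xi$; the identity $f(\xi,\eta)=\eta$ is symmetric. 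For uniqueness, if $(\xi^{*},\eta^{*})$ is another such pair, applying (\ref{eq-cyclic-contraction-1}) to the cross distances $\|\xi-\eta^{*}\|=\|F(\xi,\eta)-f(\xi^{*},\eta^{*})\|$ and $\|\xi^{*}-\eta\|=\|F(\xi^{*},\eta^{*})-f(\xi,\eta)\|$ and summing collapses, thanks to $\alpha+\beta<1$, to $\|\xi-\eta^{*}\|+\|\xi^{*}-\eta\|\le 2d$; each summand therefore equals $d$, and Lemma \ref{Eldred-2} applied once more to $\|\xi-\eta\|=\|\xi-\eta^{*}\|=d$ yields $\eta=\eta^{*}$, and symmetrically $\xi=\xi^{*}$.

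Third, I turn to the error estimates under a power-type modulus. Inserting $\delta_{\|\cdot\|}(\varepsilon)\ge C\varepsilon^q$ into Lemma \ref{lem:3} and using $d+U_j(x,y)=M_j$ together with $M_j\ge d$, the choice $l=K$ (so $n-l=0$) gives
$$\|x_{K+1}-x_{K}\|\le M_0\sqrt[q]{\tfrac{U_0(x,y)}{Cd}}\,(\alpha+\beta)^{K/q}.$$
Summing this geometric tail from $K=m$ produces the a priori bound for $\|\xi-x_m\|$. For the a posteriori version, applying Lemma \ref{lem:3} with $l=K-n+1$ for each $K\ge n$ reproduces the same shape with $M_{n-1}$, $U_{n-1}$ in place of $M_0$, $U_0$ and with a geometric series whose smallest exponent is $(\alpha+\beta)^{1/q}$, delivering the factor $c=\sqrt[q]{\alpha+\beta}/(1-\sqrt[q]{\alpha+\beta})$. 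The estimates for $\{y_n\}$ follow by the symmetric argument applied to the pair $(y,x)$.

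The hardest step is the second stage: passing from the best-proximity equality $\|F(\xi,\eta)-\eta\|=d$ to the pointwise identity $\xi=F(\xi,\eta)$ without any continuity assumption on $F$ or $f$. This step relies essentially on uniqueness of nearest points in uniformly convex geometry, encapsulated in Lemma \ref{Eldred-2}, and is the reason the convexity hypothesis on $A_x$ (and, symmetrically, $A_y$) cannot be dropped.
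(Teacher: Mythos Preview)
Your plan follows the paper's proof almost step for step: Lemma~\ref{lemma-4} for convergence, Lemma~\ref{lemma-1} for $\|\xi-\eta\|=d$, the contraction inequality plus Lemma~\ref{Eldred-2} to upgrade the best-proximity equalities to the fixed-point identities $\xi=F(\xi,\eta)$, $\eta=f(\xi,\eta)$, and then Lemma~\ref{lem:3} with the power-type modulus for the error estimates. Those parts are fine.

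There is, however, a gap in your uniqueness argument. You write $\|\xi-\eta^{*}\|=\|F(\xi,\eta)-f(\xi^{*},\eta^{*})\|$, which silently uses $\eta^{*}=f(\xi^{*},\eta^{*})$. But the theorem claims uniqueness among \emph{coupled best proximity points}, i.e.\ pairs satisfying only $\|\eta^{*}-F(\xi^{*},\eta^{*})\|=\|\xi^{*}-f(\xi^{*},\eta^{*})\|=d$; nothing yet says $(\xi^{*},\eta^{*})$ is a fixed point. Your limit argument for $(\xi,\eta)$ cannot be reused here, since no iteration sequence is available for $(\xi^{*},\eta^{*})$. The paper fills exactly this gap: it first applies (\ref{eq-cyclic-contraction-1}) with $(x,y)=(F(\xi^{*},\eta^{*}),f(\xi^{*},\eta^{*}))$ and $(u,v)=(\xi^{*},\eta^{*})$ to obtain $\|F(F(\xi^{*},\eta^{*}),f(\xi^{*},\eta^{*}))-f(\xi^{*},\eta^{*})\|=d$, uses Lemma~\ref{Eldred-2} against $\|\xi^{*}-f(\xi^{*},\eta^{*})\|=d$ to conclude $\xi^{*}=F(F(\xi^{*},\eta^{*}),f(\xi^{*},\eta^{*}))$ (and symmetrically for $\eta^{*}$), then deduces $\|\xi^{*}-\eta^{*}\|=d$ and finally $\xi^{*}=F(\xi^{*},\eta^{*})$, $\eta^{*}=f(\xi^{*},\eta^{*})$. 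Once you have that, your cross-distance summation is actually cleaner than the paper's case analysis in (\ref{8ii})--(\ref{8iii}); you just need to insert this preliminary step before invoking it.
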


\begin{proof}
	For any initial guess $(x,y)\in D$ it follows from Lemma \ref{lemma-4} that $\{x_{n}\}_{n=0}^\infty$ and $\{y_{n}\}_{n=0}^\infty$ are Cauchy sequences. From the assumptions that $(X,\|\cdot\|)$ is a Banach space and $D$ is closed it follows that there are $(\xi,\eta)\in D$, so that
	$\lim_{n\to\infty}x_n=\xi$ and $\lim_{n\to\infty}y_{n}=\eta$.
	
	From the inequalities by using the continuity of the norm function $\|\cdot -\cdot\|$ and Lemma \ref{lemma-1} we have 
	$$
	\begin{array}{lll}
	\|\xi-\eta\|-d&=&\displaystyle\lim_{n\to\infty}\|x_{n}-y_{n}\|-d=\displaystyle\lim_{n\to\infty}\|F(x_{n-1},y_{n-1})-f(x_{n-1},y_{n-1})\|-d\\
	&\leq& \displaystyle\lim_{n\to\infty}\left(\alpha\|x_{n-1}-y_{n-1}\|+\beta\|y_{n-1}-x_{n-1}\|\right)-(\alpha+\beta)d=\displaystyle\lim_{n\to\infty}\left(\alpha+\beta\right)\left(\|x_{n-1}-y_{n-1}\|-d\right)=0.
	\end{array}
	$$
	Thus $\|\xi-\eta\|=d$.
	
	From the inequalities by using the continuity of the norm function $\|\cdot -\cdot\|$ and Lemma \ref{lemma-1} we have
	$$
	\begin{array}{lll}
	\|\xi-f(\xi,\eta)\|-d&=&\displaystyle\lim_{n\to\infty}\|x_{n+1}-f(\xi,\eta)\|-d=\displaystyle\lim_{n\to\infty}\|F(x_{n},y_{n})-f(\xi,\eta)\|-d\\
	&\leq& \displaystyle\lim_{n\to\infty}\left(\alpha\|x_{n}-\eta\|+\beta\|y_{n}-\xi\|\right)-(\alpha+\beta)d=\left(\alpha+\beta\right)\left(\|\xi-\eta\|-d\right)=0.
	\end{array}
	$$
	Thus $\|\xi-f(\xi,\eta)\|=d$. From $\|\xi-\eta\|=d$, according to Lemma \ref{Eldred-2} it follows that $\eta=f(\xi,\eta)$.
	
	From the inequalities by using the continuity of the norm function $\|\cdot -\cdot\|$ and Lemma \ref{lemma-1} have
	$$
	\begin{array}{lll}
	\|\eta-F(\xi,\eta)\|-d&=&\displaystyle\lim_{n\to\infty}\|y_{n+1}-F(\xi,\eta)\|-d=\displaystyle\lim_{n\to\infty}\|f(x_{n},y_{n})-F(\xi,\eta)\|-d\\
	&\leq& \displaystyle\lim_{n\to\infty}\left(\alpha\|x_{n}-\eta\|+\beta\|y_{n}-\xi\|\right)-(\alpha+\beta)d=\left(\alpha+\beta\right)\left(\|\xi-\eta\|-d\right)=0.
	\end{array}
	$$
	Thus $\|\eta-F(\xi,\eta)\|=d$. From $\|\xi-\eta\|=d$, according to Lemma \ref{Eldred-2} it follows that $\xi=F(\xi,\eta)$.
	
	We will prove that the coupled best proximity points are unique.
	
	Let us suppose that there exists $(\xi^{*},\eta^{*})$, such that $\|\eta^{*}-F(\xi^{*},\eta^{*})\|=\|\xi^{*}-f(\xi^{*},\eta^{*})\|=d$
	and $\|\xi-\xi^{*}\|+\|\eta-\eta^{*}\|>0$. From (\ref{eq-cyclic-contraction-1}) we get the inequality
	$$
	\begin{array}{lll}
	\|F(F(\xi^{*},\eta^{*}),f(\xi^{*},\eta^{*}))-f(\xi^{*},\eta^{*})\|
	&\leq&\alpha\|\eta^{*}-F(\xi^{*},\eta^{*})\|+\beta\|\xi^{*}-f(\xi^{*},\eta^{*})\|
	+(1-(\alpha+\beta))d\\
	&=&\alpha d+\beta d+(1-(\alpha+\beta))d=d
	\end{array}
	$$
	From $\|\xi^{*}-f(\xi^{*},\eta^{*})\|=d$, according to Lemma \ref{Eldred-2}
	it follow that $\xi^{*}=F(F(\xi^{*},\eta^{*}),f(\xi^{*},\eta^{*}))$. By analogous arguments we get that $\eta^{*}=f(F(\xi^{*},\eta^{*}),f(\xi^{*},\eta^{*}))$.
	Let us suppose that $\|\xi^{*}-\eta^{*}\|>d$, then
	\begin{equation}\label{8i}
	\begin{array}{lll}
	\|\xi^{*}-\eta^{*}\|&=&\|F(F(\xi^{*},\eta^{*}),f(\xi^{*},\eta^{*}))-f(F(\xi^{*},\eta^{*}),f(\xi^{*},\eta^{*}))\|\\
	&\leq&(\alpha+\beta)\|F(\xi^{*},\eta^{*})-f(\xi^{*},\eta^{*})\|+(1-(\alpha+\beta))d\\
	&\leq&(\alpha+\beta)^2\|\xi^{*}-\eta^{*}\|+(1-(\alpha+\beta))(1+(\alpha+\beta))d\\
	&<&(\alpha+\beta)^2\|\xi^{*}-\eta^{*}\|+(1-(\alpha+\beta))(1+(\alpha+\beta))\|\xi^{*}-\eta^{*}\|<\|\xi^{*}-\eta^{*}\|,
	\end{array}
	\end{equation}
	a contradiction and thus $\|\xi^{*}-\eta^{*}\|=d$. Using that $\|\eta^{*}-F(\xi^{*},\eta^{*})\|=\|\xi^{*}-f(\xi^{*},\eta^{*})\|=d$ and Lemma \ref{Eldred-2}
	we get that $\eta^{*}=f(\xi^{*},\eta^{*})$ and $\xi^{*}=F(\xi^{*},\eta^{*})$.
	Let us suppose that $d<\max\{\|\eta-\xi^{*}\|,\|\xi-\eta^{*}\|\}$. Then
	\begin{equation}\label{8ii}
	\begin{array}{lll}
	\|\xi^{*}-\eta\|&=&\|F(\xi^{*},\eta^{*})-f(\xi,\eta)\|\leq\alpha\|\eta-\xi^{*}\|+\beta\|\xi-\eta^{*}\|+(1-(\alpha+\beta))d\\
	&<&\alpha\|\eta-\xi^{*}\|+\beta\|\xi-\eta^{*}\|+(1-(\alpha+\beta))\displaystyle\frac{\beta\|\xi-\eta^{*}\|+\alpha\|\eta-\xi^{*}\|}{\alpha +\beta}=\displaystyle\frac{\beta\|\xi-\eta^{*}\|+\alpha\|\eta-\xi^{*}\|}{\alpha +\beta}.
	\end{array}
	\end{equation}
	By similar arguments we get
	\begin{equation}\label{8iib}
	\begin{array}{lll}
	\|\xi-\eta^{*}\|&=&\|F(\xi,\eta)-f(\xi^{*},\eta^{*})\|\leq\alpha\|\xi-\eta^{*}\|+\beta\|\eta-\xi^{*}\|+(1-(\alpha+\beta))d\\
	&<&\alpha\|\xi-\eta^{*}\|+\beta\|\eta-\xi^{*}\|+(1-(\alpha+\beta))\displaystyle\frac{\alpha\|\xi-\eta^{*}\|+\beta\|\eta-\xi^{*}\|}{\alpha +\beta}=\displaystyle\frac{\alpha\|\xi-\eta^{*}\|+\beta\|\eta-\xi^{*}\|}{\alpha +\beta}.
	\end{array}
	\end{equation} 
	After summing (\ref{8ii}) and (\ref{8iib}) we get
	\begin{equation}\label{8iii}
	\|\xi^{*}-\eta\|+\|\xi-\eta^{*}\|< \|\xi-\eta^{*}\|+\|\eta-\xi^{*}\|,
	\end{equation}
	a contradiction, i.e. $d=\|\eta-\xi^{*}\|=\|\xi-\eta^{*}\|$. From  Lemma \ref{Eldred-1}, $\|\eta-\xi\|=d$ we obtain that $\xi^{*}=\xi$ and $\eta^{*}=\eta$.
	
	(\ref{item_ii})
	The Uniform convexity of $X$ ensures that $\delta_{\|\cdot\|}$ is strictly increasing and therefore its inverse function
	$\delta^{-1}_{\|\cdot\|}$ exists and is strictly increasing. By Lemma \ref{lem:3} we have
	\begin{equation}\label{eq:11}
	\|x_{n}-x_{n+1}\|
	\leq \left(d+(\alpha+\beta)^lU_{n-l}(x,y)\right)\delta^{-1}_{\|\cdot\|}
	\left(\displaystyle\frac{(\alpha+\beta)^{l}U_{n-l}(x,y)}{d+(\alpha+\beta)^lU_{n-l}(x,y)}\right).
	\end{equation}
	By the inequality $\delta_{\|\cdot\|}(t)\geq Ct^q$ it follows that $\delta^{-1}_{\|\cdot\|}(t)\leq \left(\frac{t}{C}\right)^{1/q}$. From
	(\ref{eq:11}) and the inequalities
	$$
	d\leq d+(\alpha+\beta)^lU_{n-l}(x,y)\leq \max\{P_{n-l,n-l}(x,y),P_{n-l,n-l+1}(x,y)\}
	$$
	we obtain
	\begin{equation}\label{eq:12}
	\begin{array}{lll}
	\|x_{n}-x_{n+1}\|&\leq&
	\left(d+(\alpha+\beta)^lU_{n-l}(x,y)\right)\root{q}\of{\displaystyle\frac{(\alpha+\beta)^{l}U_{n-l}(x,y)}{C\left(d+(\alpha+\beta)^lU_{n-l}(x,y)\right)}}\\
	&\leq&
	\max\{P_{n-l,n-l}(x,y),P_{n-l,n-l+1}(x,y)\}\root{q}\of{\displaystyle\frac{U_{n-l}(x,y)}{Cd}}\root{q}\of{(\alpha+\beta)^l}.
	\end{array}
	\end{equation}
	
	We have proven the existance of a unique pair $(\xi,\eta)\in A_x\times A_y$, so that $\|\xi-F(\xi,\eta)\|=d$, where $\xi$ is a limit of $\{x_{n}\}_{n=1}^\infty$ for any $(x,y)\in A_x\times A_y$.
	
	After a substitution with $l=n$ in (\ref{eq:12}) we get the inequality
	$$
	\begin{array}{lll}
	\displaystyle\sum_{n=1}^\infty\left\|x_{n}-x_{n+1}\right\|&\leq&
	\max\{\|x_0-y_0\|,\|x_0-y_1\|\}\root{q}\of{\displaystyle\frac{U_0(x,y)}{Cd}}\displaystyle\sum_{n=1}^\infty \root {q} \of {(\alpha+\beta)^{n}}\\
	&=&\max\{\|x_0-y_0\|,\|x_0-y_1\|\}\root{q}\of{\displaystyle\frac{U_0(x,y)}{Cd}}\cdot\displaystyle\frac{\root {q} \of {\alpha+\beta}}{1-\root {q} \of {\alpha+\beta}}
	\end{array}
	$$
	and consequently the series $\sum_{n=1}^\infty (x_{n}-x_{n+1})$ is absolutely convergent.
	Consequently for any $m\in\mathbb{N}$ there holds
	$\xi=x_{m}-\sum_{n=m}^\infty \left(x_{n}-x_{n+1}\right)$
	and therefore we get the inequality
	$$
	\left\|\xi-x_{m}\right\|\leq \sum_{n=m}^\infty \left\|x_{n}-x_{n+1}\right\|\leq\max\{\|x_0-y_0\|,\|x_0-y_1\|\}\root{q}\of{\frac{U_0(x,y)}{Cd}}\cdot\frac{\root {q} \of {(\alpha+\beta)^{m}}}{1-\root {q} \of {\alpha+\beta}}.
	$$
	
	The proof for $\|\eta-y_{m}\|$ can be done in a comparative mold.
	
	(\ref{item_iii}) Simply to fit some formulas in the text field we put $M_n=\max\{\|x_n-y_n\|,\|x_n-y_{n+1}\|\}$.
	After substituting in (\ref{eq:12}) with $l=1+i$  we get
	\begin{equation}\label{eq:13}
	\|x_{n+i}-x_{n+i+1}\|\leq
	M_{n-1}\root{q}\of{\frac{U_{n-1}(x,y)}{Cd}}\left(\root{q}\of{\alpha+\beta}\right)^{1+i}.
	\end{equation}
	From (\ref{eq:13}) we get \sout{that there holds} the inequality
	\begin{equation}\label{eq:14}
	\begin{array}{lll}
	\|x_{n}-x_{n+m}\|&\leq&\displaystyle\sum_{i=0}^{m-1}\|x_{n+i}-x_{n+i+1}\|\leq\sum_{i=0}^{m-1}M_{n-1}\root{q}\of{\frac{U_{n-1}(x,y)}{Cd}}\root{q}\of{(\alpha+\beta)^{1+i}}\\
	&=&M_{n-1}\root{q}\of{\displaystyle\frac{U_{n-1}(x,y)}{Cd}}\displaystyle\sum_{i=0}^{m-1} \root {q} \of {(\alpha+\beta)^{1+i}}=M_{n-1}\root{q}\of{\displaystyle\frac{U_{n-1}(x,y)}{Cd}}\cdot
	\displaystyle\frac{1-\root {q} \of {(\alpha+\beta)^{m}}}{1-\root {q} \of {\alpha+\beta}}\root {q} \of {(\alpha+\beta)},
	\end{array}
	\end{equation}
	and after letting $m\to\infty$ in (\ref{eq:14}) we obtain \sout{the inequality}
	$$
	\left\|x_{n}-\xi\right\|\leq
	\max\{\|x_{n-1}-y_{n-1}\|,\|x_{n-1}-y_{n}\|\}\root{q}\of{\frac{U_{n-1}(x,y)}{Cd}}
	\frac{\root {q} \of {\alpha+\beta}}{1-\root {q} \of {\alpha+\beta}}.
	$$
	
	By similar technique we can proof $\|y_{n}-\eta\|$.
\end{proof}

\section{Oligopoly (Duopoly) Markets}
\label{intro}
Markets dominated by a small number of players are getting more common than ever. This process has been fueled by industry consolidation, rise in international expansion and natural desire to benefit from economies in scale, all resulting in a number of mergers and acquisitions that leave few companies dominating a particular market. Cournot in 1838 in in \cite{Cournot} was the first to build a complete model of a market where few players control the price and supply quantity of the goods being traded. The original model is able to correctly estimate equilibrium conditions provided that market participants comply with the following requirements:
\begin{enumerate}[(i)]
	\item there are two players each with sufficient market power to affect the price of the goods being sold;
	\item there is no product differentiation;
	\item decisions on production output are taken simultaneously;
	\item there is no cooperation between market participants and each one reacts in a rational way, seeking to maximize its profit.
\end{enumerate}
If the two companies are not necessarily rational, a different solution can be found as for example in \cite{Ueda, Rubinstein}.

Cournot's approach is known nowadays as a static oligopoly model, which means that each company, participating in the oligopoly market considers the production of  others to remain fixed at least for a given period of time, i.e. player $i$ assumes that in time $t$ the other participants produce the quantities that they have produced in
time $t-1$. In the dynamic case, each company attempts to guess what change of production the other players will make in time $t$ \cite{Cellini-Lambertini}. 

The Cournot's duopoly model can be divided into two kinds - symmetric and asymmetric one \cite{BhanuSinha}. In the latter case we consider an efficient company and a less-efficient one, producing a homogeneous good. Both are asymmetric in terms of their pre--innovation production costs. While both companies may have a fixed marginal cost of production, the efficient one has a lower cost $c_1$, and the less-efficient one has a higher cost $c_2$ - where $c_1<c_2$. Companies still compete in quantities, as in Cournot's duopoly. In the symmetric case both firms are equally efficient. It is important to note under which case a particular market falls, because asymmetric oligopoly may also end up dealing with products that are similar but of different quality. This is a question that falls beyond the scope of our study and we shall assume that there is no significant difference in the quality characteristics of the products produced and sold by each market participant.

Considering the demand, there are two distinct cases that have to be analyzed:
\begin{enumerate}[(i)]
	\item Demand is known and does not change. In this case the Cournot's duopoly solution applies as it is.
	\item Demand may change over time. In case case it is possible to reach a situation in which there is uncertainty about market equilibrium - as in \cite{ZMM}.
\end{enumerate}
We analyze a special case of Cournot's duopoly in which the participating companies face different market demand in each of scenarios.

A generalization of Cournot's model is the Stackelberg duopoly \cite{Anderson-Engers}, where one firm is a leader and the other is a follower. 
This model is applicable when firms choose their output sequentially and not simultaneously. Cournot's model and equilibrium are in fact the direct predecessor of Nash's equilibrium point. Bertrand has introduced another kind of a duopoly model, where firms compete on prices rather than on outputs.

Contemporary markets can be subject to different regulations and barriers. Thus there are constraints applicable that influence the stability of market equilibrium and time required to reach it. We can summarize these constraints as:
\begin{enumerate}[(i)]
	\item The number of market participants;
	
	Starting with the duopoly case in the Cournot’s seminal work \cite{Cournot} it is important to take into account the scalability of suggested solutions, due to the fact that there are various market structures in contemporary economics with high concentration and limited number of players.
	
	\item The interdependence, availability, and access to information;
	
	Companies operating in an environment with high concentration ratios (as measured by share of the largest participants compared to the total market volume) cannot take decisions in a completely isolated way. They need to take into account the effects their decisions have on the other participants, as well as their imminent reaction.
	
	\item The price and non-price competition terms;
	
	In order to account for the actual behavior of companies operating under oligopoly markets, it is necessary to take into consideration the non-price competition. In some cases, product differentiation may not exist (for example in the case of raw materials) but there may be loyalty schemes or aggressive advertisement campaigns that affect the equilibrium in indirect way.
	
	\item Consistency of behavior and time dependence of market conditions;
	
	Solutions that consider time-dependency of company behavior and market changes \cite{BarCoj,ChZhW} are better able to describe contemporary oligopoly markets.
	
	\item Market entry and exit barriers;
	
	Entry and exit barriers can directly influence the number of companies operating on the oligopoly market. They also play important role in shaping the decisions of each participant as barriers can be considered as additional limiting/boundary conditions.
	
	\item Goals and profit maximization behavior;
	
	We assume that profit maximization is the sole purpose of all market participants in the oligopoly markets. It is possible that there are periods of time, or even specific markets in which this is not the case (for example as described in \cite{Klemm}) and the resulting equilibrium is different. In a long term economic agents would need to go back to profit maximization as they may be otherwise subject to acquisition or change in management.
	
	\item Linear and non-linear changes in market conditions and firm behavior.
	
	Taking into consideration these critical factors is very important in order to create formal description of oligopoly markets that is adequate to reality we live in.
\end{enumerate}

Restraints could emerge as milestones at every production stage, at a different scale, size and intensity. Moreover, each of the restraints could influence other, in a different direction and with different strength, and in parallel they could be influenced by the introduced production system and the existing market environment. The market is a vital substance, and the environment in which it functions and “breathes” would challenge the play of both actors, and could perform various scenarios of their action. Hence, these influences could affect the preliminary set up goals and price-polices of each of the observed players.
\subsection{\normalsize Modeling real-world oligopoly markets}

Let us consider two companies that offer identical goods or services. These could range from health--care in a specific region to simple grocery delivery in a neighborhood. While the assumption for having homogeneous goods is quite restrictive, it helps to start with a simple model and then extend it by adding non-price competition and brand loyalty, to name a few extra factors. In support of the approach we have used, it should be noted that oligopoly markets with heterogeneous goods can be analyzed with the same instruments as the ones we employ. The only requirements is to define and estimate parameters of the response function of each market participant. In case of identical goods it is much easier to do this. For complex products that include a variety of factors, such as positioning through non-price attributes, response functions may be harder to define and often composite ones.

To investigate the existence and uniqueness of market equilibrium we employ game theory terminology. This is supported by the notion that, company profits depend on its own output, as well as on the production of the other market participants. Under duopoly markets the result fits naturally into game theory basic cases of strategic interaction. Thus the static Cournot's oligopoly is a fully rational game, based on the \sout{following} assumptions:
\begin{enumerate}[(i)]
	\item each company, in taking its optimal production decision rationally, must know before hand
	all its rival's production and both firms should take their decisions simultaneously;
	\item each firm has a perfect knowledge of the market demand function.
\end{enumerate}

The dynamic model is a game in which case restrictive assumption (i) is replaced by some kind of expectation on the rivals' outputs. While the simplest way is to use naive expectation that production or each market participant will remain at its most recent level, it is also possible to impose more realistic views as in \cite{Teocharis, McManus-Quandt}. As a starting point, lets consider a situation in which there are two players ''A`` and ''B`` producing at moment $n+1$ goods $F(x_n,y_n)$ and $f(x_n,y_n)$, provided that at moment $n$ they have produced $x_n$ and $y_n$ respectively. Such general notation does not yet imply anything regarding market participants. Depending on the functions $F(x_n, y_n)$ and $f(x_n, y_n)$ the model can be static or dynamic, as well as symmetric or asymmetric.

However in order to have market equilibrium, the pair $(x,y)$ should satisfy the equations $x=F(x,y)$ and $y=f(x,y)$. 

Thus we will search for sufficient conditions, depending only on the response functions, that will ensure the existence and uniqueness of equilibrium pair. Compared to the classical approaches in oligopoly markets, this way has several important advantages:
\begin{itemize}
	\item it is possible to account for protective capacity present in contemporary production environments, which allows to have minimal (or even zero) marginal costs within some output ranges;
	\item it is possible to assess whether marker can reach equilibrium, regardless of the initial position;
	\item and finally it is possible to assess time necessary to reach equilibrium and whether this situation can remain stable.
\end{itemize}

An extensive study on the oligopoly markets can be found in \cite{BCKS,MS,Okuguchi,OS}.

\subsection{The basic model}
\label{sec:3}

Let us first start with a duopoly model \cite{Friedman,Smith} - two companies competing for same consumers and striving to meet the demand with overall production of $Z=x+y$. The market price is defined as $P(Z)=P(x+y)$, which is the inverse of the demand function. Market players have cost functions $c_1(x)$ and $c_2(y)$, respectively.
Assuming that both firms are acting rationally, the profit functions are
$\Pi_1(x,y)=xP(x+y)-c_1(x)$ and $\Pi_2(x,y)=yP(x+y)-c_2(y)$ of the first and the second firm, respectively.
The goal of each company is to maximize its profit, i.e. $\max\{\Pi_1(x,y):x,\ \mbox{assuming that}\ y\ \mbox{is fixed}\}$
and $\max\{\Pi_2(x,y):y,\ \mbox{assuming that}\ x\ \mbox{is fixed}\}$.
Provided that functions $P$ and $c_i$, $i=1,2$ are differentiable, we get the equations
\begin{equation}\label{equation:1}
\left|
\begin{array}{l}
\frac{\partial\Pi_1(x,y)}{\partial x}=P(x+y)+xP^\prime(x+y)-c_1^\prime(x)=0\\
\frac{\partial\Pi_2(x,y)}{\partial y}=P(x+y)+yP^\prime(x+y)-c_2^\prime(y)=0.
\end{array}
\right.
\end{equation}

The solution of (\ref{equation:1}) presents the equilibrium pair of production in the duopoly market \cite{Friedman,Smith}.
Often equations (\ref{equation:1}) have solutions in the form of $x=b_1(y)$ and $y=b_2(x)$, which are called response functions \cite{Friedman}.

It may turn out difficult or impossible to solve (\ref{equation:1}) thus it is often advised to search for an approximate solution.
Another drawback, when searching of an approximate solution is that it may be not stable.
Fortunately we can find an implicit formula for the response function in (\ref{equation:1}) i.e.
$x=\frac{c_1^\prime(x)-P(x+y)}{P^\prime(x+y)}=F(x,y) $ and $y=\frac{c_2^\prime(y)-P(x+y)}{P^\prime(x+y)}=f(x,y)$ .

It is still possible that we may end up with response functions, that do not lead to maximization of the profit $\Pi$. As it is often assumed, each participant response depends its own production level and that of the pother payers. E.g. if at a moment $n$ the output quantities are $(x_n,y_n)$, and the first player changes its productions to $x_{n+1}=F(x_n,y_n)$,  then the second one will also change its output to $y_{n+1}=f(x_n,y_n)$. 
The iterated sequence $\{(x_n,y_n)\}_{n=1}^\infty$ are defined in Definition \ref{iterated_sequence}. We have an equilibrium if there are two productions $x$ and $y$, such that $x=F(x,y)$ and $y=f(x,y)$.  The functions $\Pi_i$ are called payoff functions. To ensure that the solutions of (\ref{equation:1}) will present a maximization of the payoff functions a sufficient condition is that $\Pi_i$ be concave functions \cite{BCKS,MS,OS}, by using of response function we alter the maximization problem into a coupled fixed point one thus all assumptions of concavity and differentiability can be skipped. The problem of solving the equations $x=F(x,y)$ and $y=f(x,y)$ is the problem of finding of coupled fixed points for an ordered pair of maps $(F,f)$ \cite{GL}. Yet an important limitation may be that players cannot change output too fast and thus the player may not perform maximize their profits.

Focusing on response functions, allows to put together Cournot and Bertand models. Indeed let the first company have reaction be $F(X,Y)$ and the second one $f(X,Y)$, where $X=(x,p)$ and $Y=(y,q)$. Here $x$ and $y$ denote the output quantity and $(p,q)$ are the prices set by players. In this case companies can compete in terms of both price and quantity.

\section{Application of the Main Results in Duopoly Markets}

In case of two major players taking all or most of the market, we need to consider special cases depending on intersection of production set. The situation in which production sets of both companies have an empty intersection may seem extreme, but it is not impossible. For example if one of the companies is working at a very large scale it may simply be impractical to sustain a low level of output. On the other hand, if the company is just too small to undertake large projects it may also happen that expanding its production beyond certain limit is not feasible. Therefore, it is possible that long term contracts or technical issues prohibit certain type of actions and impose special limitations.
The mathematical justifications of the results is presented in the Appendix section.

\subsection{Players' production sets have a nonempty intersection}

\begin{assumption}\label{assumption1}
	\begin{enumerate}[\normalfont (1)] Let there is a duopoly market, satisfying the following assumptions:
		
		\item The two firms are producing homogeneous goods that are perfect substitutes.
		
		\item The first firm can produce qualities from the set $A_x$ and the second firm can produce qualities from the set $A_y$, where $A_x$ and $A_y$ be closed, nonempty subsets of a complete metric space $(X,\rho)$
		
		\item Let there exist a closed subset $D\subseteq A_x\times A_y$ and maps $F:D\to A_x$ and $f:D\to A_y$, such that $(F(x,y),f(x,y))\subseteq D$ for every $(x,y)\in D$, be the response functions for firm one and two respectively
		
		\item Let there exist $\alpha,\beta,\gamma,\delta>0$, $\max\{\alpha+\gamma,\beta+\delta\}<1$, such that the inequality
		\begin{equation}\label{equation:2}
		\begin{array}{lll}
		\rho(F(x,y),F(u,v))+\rho(f(z,w),f(t,s))&\leq& \alpha \rho(x, u)+\beta \rho(y,v)+\gamma\rho(z, t)+\delta \rho(w,s)
		\end{array}
		\end{equation}
		holds for all $(x,y), (u,v), (z,w), (t,s)\in A_x\times A_y$.
	\end{enumerate}
\end{assumption}

Then
\begin{enumerate}[\normalfont (I)]
	\item\label{item_I-prime} There exists a unique pair $(\xi,\eta)$ in $D$, satisfying $\xi=F(\xi,\eta)$ and $\eta=f(\xi,\eta)$,
	i.e. a market equilibrium pair.
	Moreover the iteration sequences $\{x_{n}\}_{n=0}^\infty$ and $\{y_n\}_{n=0}^\infty$ converge to $\xi$ and $\eta$, respectively.
	\item\label{item_II-prime} a priori error estimates hold
	$\max\left\{\rho(x_{n},\xi),\rho(y_{n},\eta)\right\}\leq \frac{k^{n}}{1-k}(\rho(x_1,x_0)+\rho(y_1,y_0))$;
	\item\label{item_III-prime} a posteriori error estimates hold
	$
	\max\left\{\rho(x_{n},\xi),\rho(y_{n},\eta)\right\}\leq\frac{k}{1-k}(\rho(x_{n-1},x_{n})+\rho(y_{n-1},y_{n}))$;
	\item\label{item_IV-prime} The  rate of convergence for the sequences of successive iterations is 
	$\rho (x_n,\xi)+\rho (y_n,\eta)\leq k\left(\rho (x_{n-1},\xi)+(y_{n-1},\eta)\right)$, where $k=\max\{\alpha+\gamma,\beta+\delta\}$.
\end{enumerate}

The proof is a direct consequence of Theorem \ref{th:3456}.


Let us consider a duopoly market. Let the two firms produce qualities from the set $A_x$ and the second firm can produce qualities from the set $A_y$, where $A_x$ and $A_y$ be nonempty subsets of a complete metric space $(X,\rho)$. Any of the firms can produce a bundle of products $x=(x_1,x_2,\dots x_n)\in X$.
Assumption \ref{assumption1} ensures the existence and uniqueness of the production bundles $(x_1,x_2,\dots x_n), (y_1,y_2,\dots y_n)\in X$ of $n$--goods,
that present the equilibrium in a duopoly economy.

\subsubsection{A linear case, when each player is producing a single product, goods being perfect substitutes}

Let us consider a market with two competing firms, each firm producing just one product, and both goods are perfect substitutes. Let the two firms produce quantities $x\in A_x$ and $y\in A_y$, respectively, where $A_x,A_y\subset [0,+\infty)$ and $(X,\rho)$ be the complete metric space $(\mathbb{R},|\cdot|)$. 
Let us consider the response functions of player one $F(x,y)=a-s-p x-qy$ and player two $f(x,y)=a-r-\mu y-\nu x$,
where 
\begin{enumerate}[\normalfont (1)]
	\item $a,s,r,p,q,\mu,\nu>0$, $s<a$, $r<a$, $\max\{p+\mu,q+\nu\}<1$
	\item $A_x=\left[0,\frac{a-s}{p}\right]\cap \left[0,\frac{a-r}{\mu}\right]$ and $A_y=\left[0,\frac{a-s}{q}\right]\cap \left[0,\frac{a-r}{\nu}\right]$
	\item\label{3a} $D$ can be defined in three ways:
	\begin{enumerate}[\normalfont (\ref{3a}a)]
		\item\label{3aaa} $D=\left[0,\frac{a\mu-aq-s\mu-qr}{\mu p-\nu q}\right]\times \left[0,\frac{ap-a\nu+s\nu-pr}{\mu p-\nu q}\right]$, provided that  $a-s\leq\frac{a\mu-aq-s\mu-qr}{\mu p-\nu q}$ and $a-r\leq\frac{ap-a\nu+s\nu-pr}{\mu p-\nu q}$
		\item\label{3bbb} $D=\left[0,a-s\right]\times \left[0,a-r\right]$, provided that  $\mu r+\nu s-a\mu-a\nu+a-r>0$ and $ps+qr-ap-aq+a-s>0$
		\item\label{3ccc}
		$D=\left\{
		\begin{array}{l}
		0\leq x\leq \displaystyle\frac{a-s}{p}\\
		0\leq y\leq \displaystyle\frac{a-r-\mu x}{\nu}.
		\end{array}
		\right.$
	\end{enumerate}
\end{enumerate}
It is easy to check that $F:D\subset A_x\times A_y\to A_x$, $f:D\subset A_x\times A_y\to A_y$ and $(F(D),f(D))\subseteq D$.

Indeed let us consider case (\ref{3aaa}). From the assumptions that $a,s,r,p,q,\mu,\nu>0$ we get
$$
F(x,y)\leq F(0,0)=a-s\leq \frac{a\mu-aq+s\mu-qr}{\mu p-\nu q},\ \
F(x,y)\geq F\left(\frac{a\mu-aq+s\mu-qr}{\mu p-\nu q},\frac{ap-a\nu+s\nu-pr}{\mu p-\nu q}\right)= 0,
$$
and
$$
f(x,y)\leq f(0,0)=a-r\leq \frac{ap-a\nu+s\nu-pr}{\mu p-\nu q},\ \
f(x,y)\geq f\left(\frac{a\mu-aq+s\mu-qr}{\mu p-\nu q},\frac{ap-a\nu+s\nu-pr}{\mu p-\nu q}\right)= 0.
$$
Therefore $F:D\to A_x$, $f:D\to A_y$ and $(F(D),f(D))\subseteq D$.

It can be proven in a similar fashion that $(F(D),f(D))\subseteq D$ and for the cases (\ref{3bbb}) and (\ref{3ccc}).

From the inequalities $\left|F(x,y)-F(u,v)\right|=\left|p (x-u)+q(y-v)\right|\leq p |x-u|+q |y-v|$ and
$\left|f(z,w)-f(t,s)\right|=\left|\mu (z-t)+\nu (w-s)\right|\leq\mu |z-t|+\nu |w-s|$ it follows that
$$
\left|F(x,y)-F(u,v)\right|+\left|f(z,w)-f(t,s)\right| \leq p |x-u|+q|y-v|+\mu |z-t|+\nu |w-s|
$$
and thus  the ordered pair $(F,f)$ satisfies Assumption \ref{assumption1} with constants $\alpha=p$, $\beta=q$, $\gamma=\mu$, $\delta=\nu$, because $\max\{p+\mu,q+\nu\}<0$.
Consequently there exists an equilibrium pair $(x,y)$ and for any initial start in the economy the iterated sequences $(x_n,y_n)$ converge to the 
market equilibrium $(x,y)$.
The  equilibrium pair is
$x=\frac{a\mu -aq-s\mu+qr+a-s}{\mu p-\nu q+\mu +p+1}$, $y=\frac{ap-a\nu+s\nu -pr -a+r}{\mu p-\nu q+\mu +p+1}$.
\begin{figure}[!h]
	\begin{center}
		\epsfig{file=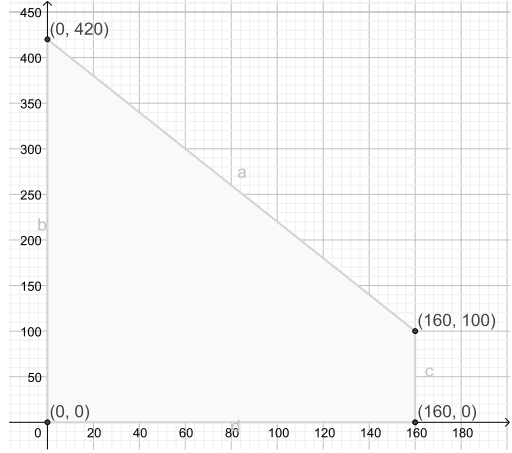,width=0.40\textwidth}
		\caption{The set $D$ in the case \ref{3ccc}}
	\end{center}
\end{figure}
Let us consider a particular case: $a=100$, $s=20$, $r=30$, $p=\frac{1}{2}$, $q=\frac{1}{8}$, $\mu=\frac{1}{3}$, $\nu=\frac{1}{6}$. Values selected for this case are arbitrarily chosen with only general conditions in mind. However in actual situation he values of $p$, $q$, $\mu$ and $\nu$ reflect the actual management and marketing policy of the market participants. 
In this case $F(x,y)=80-\frac{x}{2}-\frac{y}{8}$, $f(x,y)=70-\frac{x}{3}-\frac{y}{6}$, $A_x=[0,210]$, $A_y=[0,320]$. The subset $D$ can be considered either $D=[0,110]\times [0,200]$ (\ref{3aaa}) or
$D=\{0\leq x\leq 160, 0\leq y\leq 420-2x\}$ (\ref{3ccc}).

We get in this case that the equilibrium pair of the production of the two firms is $(49.51, 45.85)$ and the total production will be $x+y=95.36$.

\begin{center}
	\captionof{table}{Values of the iterated sequence $(x_n,y_n)$ if stared with $(40,60)$}
	\label{table1}
	\begin{tabular*}{0.9\textwidth}{@{\extracolsep{\fill} }lrrrrrrr}
		\hline
		$n$&0&1&2&5&10&20&30\\
		\hline
		$x_n$&40&52.5&47.92&49.85&49.49&49.51205&49.51219\\
		\hline
		$y_n$&60&46.6&44.72&46.11&45.83&45.85354&45.85366\\
		\hline
	\end{tabular*}
\end{center}

\begin{center}
	\captionof{table}{Number $n$ of iterations needed by the a priori estimate if stared with $(100,20)$}
	\label{table2}
	\begin{tabular*}{0.9\textwidth}{@{\extracolsep{\fill} }lrrrrr}
		\hline
		$\varepsilon$&0.1&0.01&0.001&0.0001&0.00001\\
		\hline
		$n$&41&53&66&79&91\\
		\hline
	\end{tabular*}
\end{center}

\begin{center}
	\captionof{table}{Number $n$ of iterations needed by the a posteriori estimate if stared with $(100,20)$}
	\label{table3}
	\begin{tabular*}{0.9\textwidth}{@{\extracolsep{\fill} }lrrrrr}
		\hline
		$\varepsilon$&0.1&0.01&0.001&0.0001&0.00001\\
		\hline
		$n$&14&18&23&27&32\\
		\hline
	\end{tabular*}
\end{center}

Let us consider a classical example, where the price function is a linear and so are the cost functions of both players. Assuming the feasible market price is defined by $P(x,y)=120-x-y$, it is expected that additional output $x$ from the first company as well as extra production $y$ of the second one will cause decrease in prices. Therefore under equilibrium conditions $x+y$ will be the total production of the two firms and it will also be reflected in prices. Let the two firms have cost functions equal to $30x$ and $20y$, respectively. The profit of the first one is
$$
\Pi_1(x,y)=xP(x,y)-30x=x(120-x-y)-30x=90x-x^2-xy
$$
and the profit of the second one is
$$
\Pi_2(x,y)=yP(x,y)-20y=y(120-x-y)-20y=100y-y^2-xy.
$$
Following Cournot model after solving (\ref{equation:1}) we get the response functions $F:D\to A_x$ and $f:D\to A_y$ of the two firms
$F(y)=\frac{90-y}{2}$ and $f(x)=\frac{100-x}{2}$,
where $A_y=[0,90]$, $A_x=[0,100]$ and $D=A_x\times A_y$.
Consequently it is a special case of the general example with $a=60$, $s=15$, $r=10$, $p=0$, $q=\frac{1}{2}$, $\mu=\frac{1}{2}$, $\nu=0$.
Thus there exists an equilibrium pair $(x,y)$ and for any initial start in the economy where iterated sequences $(x_n,y_n)$ converge to the 
market equilibrium $(x,y)$. We estimate in this case that the equilibrium pair of the production is $(80/3, 110/3)$ and the total output will be $a=190/3$.

Let us assume that the two firms have started with output $x_0=40$ and $y_0=60$. In the following table we present how, depending on the response functions $F$ and $f$ the output of each company will change.

\begin{center}
	\captionof{table}{Values of the iterated sequence $(x_n,y_n)$ if stared with $(40,60)$}
	\label{table4}
	\begin{tabular*}{0.9\textwidth}{@{\extracolsep{\fill} }lrrrrrr}
		\hline
		$n$&0&1&2&5&10&20\\
		\hline
		$x_n$&40&15&30.0&25.94&26.68&26.67\\
		\hline
		$y_n$&60&30&42.5&36.25&36.69&36.67\\
		\hline
	\end{tabular*}
\end{center}

Let us assume that the two firms have started from productions $x_0=100$ and $y_0=20$. In the next table we present 
how using the response functions $F$ and $f$ the productions of the two firm will change

\begin{center}
	\captionof{table}{Values of the iterated sequence $(x_n,y_n)$ if stared with $(100,20)$}
	\label{table5}
	\begin{tabular*}{0.9\textwidth}{@{\extracolsep{\fill} }lrrrrrr}
		\hline
		$n$&0&1&2&5&10&20\\
		\hline
		$x_n$&100&35&45.0&27.19&26.74&26.67\\
		\hline
		$y_n$&20&0&32.5&34.38&36.65&36.67\\
		\hline
	\end{tabular*}
\end{center}

\begin{center}
	\captionof{table}{Number $n$ of iterations needed by the a priori estimate if stared with $(100,20)$}
	\label{table6}
	\begin{tabular*}{0.9\textwidth}{@{\extracolsep{\fill} }lrrrrr}
		\hline
		$\varepsilon$&0.1&0.01&0.001&0.0001&0.00001\\
		\hline
		$n$&11&15&18&21&25\\
		\hline
	\end{tabular*}
\end{center}

\begin{center}
	\captionof{table}{Number $n$ of iterations needed by the a posteriori estimate if stared with $(100,20)$}
	\label{table7}
	\begin{tabular*}{0.9\textwidth}{@{\extracolsep{\fill} }lrrrrr}
		\hline
		$\varepsilon$&0.1&0.01&0.001&0.0001&0.00001\\
		\hline
		$n$&11&15&18&21&25\\
		\hline
	\end{tabular*}
\end{center}

\subsubsection{A nonlinear case, when each player is producing a single product, while goods sold are perfect substitutes}

Let us consider a market with two competing firms, producing perfect substitute products with quantities $x\in A_x$ and $y\in A_y$, respectively, where $A_x,A_y\subset [0,+\infty)$ and $(X,\rho)$ be the complete metric space $(\mathbb{R},|\cdot|)$. 
Let us assume that each firm produces at least $1$ item, i.e. $x,y\geq 1$
Let us consider the response functions of player one $F(x,y)=\frac{90-x-\frac{y}{8}-\frac{\sqrt{y}}{2}}{2}$ and player 
two $f(x,y)=\frac{100-\frac{x}{4}-y-\sqrt{x}}{3}$,
where 
\begin{enumerate}[\normalfont (1)]
	\item $A_x=[1,44]$ and $A_y=[1,33]$
	\item\label{3b} $D$ can be defined as $D=A_x\times A_y$
\end{enumerate}
It is easy to check that $F:D=A_x\times A_y\to A_x$, $f:D=A_x\times A_y\to A_y$ and $(F(D),f(D))\subseteq D$.

Indeed, we get
$$
F(x,y)\leq F(1,1)=44, F(x,y)\geq F\left(44,33\right)=13.31
$$
and
$$
f(x,y)\leq f(1,1)=32.33<33,\ \ f(x,y)\geq f\left(44,33\right)=5.46
$$
and therefore $F:D\to A_x$, $f:D\to A_y$ and $(F(D),f(D))\subseteq D$.

There exists $\xi$ between the points $y$ and $v$ so that there holds 
$\left|\sqrt{y}-\sqrt{v}\right|=\frac{1}{2\sqrt{\xi}}|y-v|$. 
From the assumption that $y,v\geq 1$ we get that $\left|\sqrt{y}-\sqrt{v}\right|\leq\frac{1}{2}|y-v|$.
Using this last inequality we obtain
$$
\left|F(x,y)-F(u,v)\right|
\leq\frac{1}{2}\left|x-u\right|+\frac{1}{16}\left|y-v\right|+
\frac{1}{8}\left|y-v\right|
=\frac{1}{2}\left|x-u\right|+\frac{3}{16}\left|y-v\right|
$$
and
$$
\left|f(z,w)-f(t,s)\right|
\leq\frac{1}{12}\left|z-t\right|+\frac{1}{3}\left|w-s\right|+
\frac{1}{6}\left|w-s\right|
=\frac{1}{12}\left|z-t\right|+\frac{1}{2}\left|w-s\right|.
$$
Therefore
$$
\left|F(x,y)-F(u,v)\right|+\left|f(z,w)-f(t,s)\right|\leq \displaystyle\frac{1}{2}\left|x-u\right|+\frac{3}{16}\left|y-v\right|+\frac{1}{12}\left|z-t\right|+\frac{1}{2}\left|w-s\right|
$$
and thus  the ordered pair $(F,f)$ satisfies Assumption \ref{assumption1} with constants $\alpha=1/2$, $\beta=3/16$, $\gamma=1/12$ and $\delta=1/2$,
$\max\{1/2+1/12,3/16+1/2\}=\max\{7/12,11/16\}=7/12$.
Consequently there exists an equilibrium pair $(x,y)$ and for any initial start in the economy the iterated sequences $(x_n,y_n)$ converge to the 
market equilibrium $(x,y)$. We get in this case that the equilibrium pair of the production of the two firms is $(28.3, 21.9)$ and the total production will be $a=50.2$.

\begin{center}
	\captionof{table}{Values of the iterated sequence $(x_n,y_n)$ if stared with $(10,50)$}
	\label{table8}
	\begin{tabular*}{0.9\textwidth}{@{\extracolsep{\fill} }lrrrrrrr}
		\hline
		$n$&0&1&2&5&10&20&30\\
		\hline
		$x_n$&10&35.10&25.56&28.61&28.29&28.30747&28.30750\\
		\hline
		$y_n$&50&14.77&23.50&21.99&21.89&21.90064&21.90066\\
		\hline
	\end{tabular*}
\end{center}

\begin{center}
	\captionof{table}{Number $n$ of iterations needed by the a priori estimate if stared with $(10,50)$}
	\label{table9}
	\begin{tabular*}{0.9\textwidth}{@{\extracolsep{\fill} }lrrrrr}
		\hline
		$\varepsilon$&0.1&0.01&0.001&0.0001&0.00001\\
		\hline
		$n$&39&50&62&73&84\\
		\hline
	\end{tabular*}
\end{center}

\begin{center}
	\captionof{table}{Number $n$ of iterations needed by the a posteriori estimate if stared with $(10,50)$}
	\label{table10}
	\begin{tabular*}{0.9\textwidth}{@{\extracolsep{\fill} }lrrrrr}
		\hline
		$\varepsilon$&0.1&0.01&0.001&0.0001&0.00001\\
		\hline
		$n$&12&16&20&24&28\\
		\hline
	\end{tabular*}
\end{center}

Let us consider again a case with two players, producing two products, but let them know the market demand function and let they behave rational, i.e. they a trying to maximize their profits, assuming that the rival player will do the same.

Let there is no limit on the market, but let us assume that the total consumption is $100\%$. That is the market will consume a constant $1$, which is $100\%$, and the production of both firms will be a percentage of the consumption $x$ and $y$ respectively, i.e. $x,y\in [0,1]$. Let the market price be defined by $P(x,y)=1-\frac{x+y}{2}-\frac{x^2+y^2}{24}$, where $x$ is the production of one of the firms, $y$ be the production of the other one, assuming that number $1$ presents $100\%$. Let the two firms have cost functions equal to $C_x(x)=x/2+x^2/16$ and $y/6+y^2/12$, respectively. The profit of the first firm is
$$
\Pi_1(x,y)=xP(x,y)-C_x(x)=\frac{7x}{8}-\frac{9x^2}{16}-\frac{xy}{2}-\frac{x^3}{24}-\frac{xy^2}{24}
$$
and the profit of the second firm is
$$
\Pi_2(x,y)=yP(x,y)-C_y=\frac{5y}{6}-\frac{13y^2}{24}-\frac{xy}{2}-\frac{y^3}{24}-\frac{x^2y}{24}.
$$
Following Cournot model after solving (\ref{equation:1}) we get the response functions $F$ and $f$ of the two players
$$
F(x,y)=\frac{7}{8}-\frac{x}{8}-\frac{y}{2}-\frac{x^2}{8}-\frac{y^2}{24}\ \ \mbox{and}\ \ f(x,y)=\frac{5}{6}-\frac{x}{2}-\frac{y}{12}-\frac{y^2}{8}-\frac{x^2}{24},
$$
which satisfy $F:[0,1]\times [0,1]\to [0,1]$ and $f:[0,1]\times [0,1]\to [0,1]$, i.e. $D=[0,1]\times [0,1]$.
Using the inequality $|x^2-y^2|=2\xi|x-y|\leq 2|x-y|$, for any $x,y\in [0,1]$ and some $\xi$ between $x$ and $y$ we obtain
$$
\left|F(x,y)-F(u,v)\right|
\leq \frac{3}{8}|x-u|+\frac{7}{12}|y-v|
$$
and
$$
\left|f(z,w)-f(s,t)\right|
\leq \frac{7}{12}|z-t|+\frac{7}{24}\left|w-s\right|.
$$
Therefore
$$
\left|F(x,y)-F(u,v)\right|+\left|f(z,w)-f(s,t)\right|\leq\displaystyle \frac{3}{8}\left|x-u\right|+\frac{7}{12}|y-v|+\frac{7}{12}|z-s|+\frac{7}{24}\left|w-t\right|
$$
and thus  the ordered pair $(F,f)$ satisfies Assumption \ref{assumption1} with constants $\alpha=3/8$, $\beta=7/12$, $\gamma=7/12$ and $\delta=7/24$.
There holds $\max\{\alpha+\gamma,\beta+\delta\}<0.958$.
Thus there exists an equilibrium pair $(x,y)$ and for any initial start in the economy the iterated sequences $(x_n,y_n)$ converge to the 
market equilibrium $(x,y)$. We get in this case that the equilibrium pair of the production of the two firms is $(0.537, 0.451)$, i.e. the first firm will have a share of $53.7\%$ and the second one a share of $45.1\%$ of the sold goods. The total production will be $0.989$, i.e. $98.9\%$ of the total demand of the market.

Let us assume that the two firms have started from productions $x_0=40$ and $y_0=60$. In the next table we present 
how using the response functions $F$ and $f$ the productions of the two firms change in time

\begin{center}
	\captionof{table}{Values of the iterated sequence $(x_n,y_n)$ if stared with $(50\%,50\%)$}
	\label{table11}
	\begin{tabular*}{0.9\textwidth}{@{\extracolsep{\fill} }lrrrrrr}
		\hline
		$n$&0&1&2&5&10&20\\
		\hline
		$x_n$&50\%&51.8\%&53.3\%&53.66\%&53.735\%&53.732\%\\
		\hline
		$y_n$&50\%&46.4\%&45.9\%&45.17\%&45.184\%&45.181\%\\
		\hline
	\end{tabular*}
\end{center}

\begin{center}
	\captionof{table}{Values of the iterated sequence $(x_n,y_n)$ if stared with $(10\%,90\%)$}
	\label{table12}
	\begin{tabular*}{0.9\textwidth}{@{\extracolsep{\fill} }lrrrrrr}
		\hline
		$n$&0&1&2&5&10&20\\
		\hline
		$x_n$&10\%&34.6\%&49.2\%&53.16\%&53.749\%&53.732\%\\
		\hline
		$y_n$&90\%&58.4\%&52.4\%&45.19\%&45.201\%&45.181\%\\
		\hline
	\end{tabular*}
\end{center}

\begin{center}
	\captionof{table}{Values of the iterated sequence $(x_n,y_n)$ if stared with $(100\%,0\%)$}
	\label{table13}
	\begin{tabular*}{0.9\textwidth}{@{\extracolsep{\fill} }lrrrrrr}
		\hline
		$n$&0&1&2&5&10&20\\
		\hline
		$x_n$&100\%&66.6\%&61.4\%&53.68\%&53.757\%&53.732\%\\
		\hline
		$y_n$&0\%  &25.0\%&40.6\%&44.55\%&45.201\%&45.181\%\\
		\hline
	\end{tabular*}
\end{center}

\subsubsection{Each player is producing two product types, goods from each type being perfect substitutes}

Let us consider a market with two competing firms, and each firm is producing two product types. For simplicity we assume that goods from each type produced by major players are perfect substitutes. While it is possible that two types have nothing it common, it still means that within each type customers can freely replace product from the first company with one manufactured by the second one. Let us assume that each firm produces at least $1$ item from each product, i.e. $x=(x_1,x_2), y=(y_1,y_2), x_1,x_2,y_1,y_2\geq 1$. Let us denote the production of the two players by $x=(x_1,x_2)$ and $y=(y_1,y_2)$, respectively.

Let the market of the two goods be endowed with the $p$ norm, $p\in [1,\infty)$, i.e.
$$
\rho((x_1,x_2),(y_1,y_2))=\|(x_1,x_2)-(y_1,y_2)\|_p=
\left(|x_1-y_1|^p+|x_2-y_2|^p\right)^{1/p}.
$$

Let us consider the response functions $F(x,y)=(F_1(x,y),F_2(x,y))$ and $f(x,y)=(f_1(x,y),f_2(x,y))$ defined by
$$
F(x,y)=\left\{\begin{array}{l}
\displaystyle\frac{90-\displaystyle\frac{x_1+x_2}{2}-\displaystyle\frac{y_1+y_2}{3}}{3},\\[10pt]
\displaystyle\frac{90-\displaystyle\frac{x_1+x_2}{2}-\displaystyle\frac{y_1+y_2}{3}}{3};
\end{array}
\right.
\ \
f(x,y)=\left\{\begin{array}{l}
\displaystyle\frac{100-\displaystyle\frac{x_1+x_2}{4}-\displaystyle\frac{y_1+y_2}{3}}{4}\\[10pt]
\displaystyle\frac{100-\displaystyle\frac{x_1+x_2}{4}-\displaystyle\frac{y_1+y_2}{3}}{4}.
\end{array}
\right.
$$
where 
\begin{enumerate}[\normalfont (1)]
	\item $A_x=[0,30]\times [0,30]$ and $A_y=[0,25]\times [0,25]$
	\item\label{3ss-0} $D=[0,30]\times[0,30]\times [0,25]\times [0,25]$
\end{enumerate}
It is easy to see that $(F(x,y),f(x,y))\subseteq D$, whenever $(x,y)=((x_1,x_2),(y_1,y_2))\in D$.

Using the inequality $\frac{a+b}{2}\leq \frac{(a^p+b^p)^{1/p}}{2^{1/p}}$, which holds for any $a,b\geq 0$ we get the chain of inequalities
$$
\begin{array}{lll}
\left\|F(x,y)-F(u,v)\right\|_p&=&\left\|\left(F_1(x,y),F_2(x,y)\right)-\left(F_1(u,v),F_2(u,v)\right)\right\|_p\\[16pt]
&=&\displaystyle\left\|\left(\frac{\frac{(u_1+u_2)-(x_1+x_2)}{2}+\frac{v_1+v_2-(y_1+y_2)}{3}}{3},\frac{\frac{(u_1+u_2)-(x_1+x_2)}{2}+\frac{v_1+v_2-(y_1+y_2)}{3}}{3}\right)\right\|_p\\
&\leq&\displaystyle\frac{2}{3}\left(\frac{|u_1-x_1|+|u_2-x_2|}{2}+\frac{|v_1-y_1|+|v_2-y_2|}{3}\right)\\[10pt]
&=&\displaystyle\frac{2}{3}\frac{|u_1-x_1|+|u_2+x_2|}{2}+\frac{4}{9}\frac{|v_1-y_1|+|v_2+y_2|}{2}\\[10pt]
&\leq&\frac{2^{\frac{p-1}{p}}}{3}\left(
|x_1-u_1|^p+|x_2-y_2|^p\right)^{1/p}+\frac{2^{\frac{p-1}{p}+1}}{9}\left(|y_1-v_1|^p+|y_2-v_2|^p\right)^{1/p}\\[10pt]
&=&\frac{2^{\frac{p-1}{p}}}{3}\|(x_1,x_2)-(u_1,u_2)\|_p+\frac{2^{\frac{p-1}{p}+1}}{9}\|(y_1,v_2)-(y_1,v_2)\|_p
=\frac{2^{\frac{p-1}{p}}}{3}\|x-u\|_p+\frac{2^{\frac{p-1}{p}+1}}{9}\|y-v\|_p
\end{array}
$$
and
$$
\begin{array}{lll}
\left\|f(x,y)-f(u,v)\right\|_p&=&\left\|\left(f_1(x,y),f_2(x,y)\right)-\left(f_1(u,v),f_2(u,v)\right)\right\|_p\\[16pt]
&=&\displaystyle\left\|\left(\frac{\frac{(u_1+u_2)-(x_1+x_2)}{4}+\frac{v_1+v_2-(y_1+y_2)}{3}}{3},\frac{\frac{(u_1+u_2)-(x_1+x_2)}{4}+\frac{v_1+v_2-(y_1+y_2)}{3}}{3}\right)\right\|_p\\[10pt]
&\leq&\displaystyle\frac{2}{3}\left(\frac{|u_1-x_1|+|u_2-x_2|}{4}+\frac{|v_1-y_1|+|v_2-y_2|}{3}\right)\\[10pt]
&=&\displaystyle\frac{2}{6}\frac{|u_1-x_1|-|u_2+x_2|}{2}+\frac{4}{9}\frac{|v_1-y_1|+|v_2+y_2|}{2}\\[10pt]
&\leq&\frac{2^{\frac{p-1}{p}}}{6}\left(
|x_1-u_1|^p+|x_2-y_2|^p\right)^{1/p}+\frac{2^{\frac{p-1}{p}+1}}{9}\left(|y_1-v_1|^p+|y_2-v_2|^p\right)^{1/p}\\[10pt]
&=&\frac{2^{\frac{p-1}{p}}}{6}\|(x_1,x_2)-(u_1,u_2)\|_p+\frac{2^{\frac{p-1}{p}+1}}{9}\|(y_1,v_2)-(y_1,v_2)\|_p
=\frac{2^{\frac{p-1}{p}}}{6}\|x-u\|_p+\frac{2^{\frac{p-1}{p}+1}}{9}\|y-v\|_p.
\end{array}
$$
Therefore
$$
\left\|F(x,y)-F(u,v)\right\|+\left\|f(z,w)-f(t,s)\right\|\leq \displaystyle\frac{2^{\frac{p-1}{p}}}{3}\left\|x-u\right\|+\frac{2^{\frac{p-1}{p}+1}}{9}\left\|y-v\right\|+\displaystyle\frac{2^{\frac{p-1}{p}}}{6}\left\|z-t\right\|+\frac{2^{\frac{p-1}{p}+1}}{9}\left\|w-s\right\|.
$$
From the inequalities $\frac{2^{\frac{p-1}{p}}}{3}+\frac{2^{\frac{p-1}{p}}}{6}<\frac{2}{3}+\frac{1}{3}=1$ and
$\frac{2^{\frac{p-1}{p}+1}}{9}+\frac{2^{\frac{p-1}{p}+1}}{9}\leq 2\frac{4}{9}<1$ it follows that
the ordered pair $(F,f)$ satisfies Assumption \ref{assumption1} with constants $\alpha=\frac{2^{\frac{p-1}{p}}}{3}$, $\beta=\frac{2^{\frac{p-1}{p}+1}}{9}$, $\gamma=\frac{2^{\frac{p-1}{p}}}{6}$ and $\delta=\frac{2^{\frac{p-1}{p}+1}}{9}$.
Thus there exists an equilibrium pair $(x,y)$ and for any initial start in the economy the iterated sequences $(x_n,y_n)$ converge to the 
market equilibrium $(x,y)$. We get in this case that the equilibrium pair of the production of the two firms is $x=(19.27, 19.27)$, $y=(19.36,19.36)$ and the total production will be $a=(38.63,38.63)$.

\begin{center}
	\captionof{table}{Values of the iterated sequence $(x_n,y_n)$ if started with $x=(19.27, 19.27)$, $y=(19.36,19.36)$}
	\label{table14}
	\begin{tabular*}{0.9\textwidth}{@{\extracolsep{\fill} }lrrr}
		\hline
		$n$&0&1&2\\
		\hline
		$x_n$&(10,10)&(15.56,15.56)&(21.39,21.39)\\
		\hline
		$y_n$&(50,50)&(15.42,15.42)&(20.49,20.49)\\
		\hline
	\end{tabular*}
\end{center}
\begin{center}
	\captionof{table}{Values of the iterated sequence $(x_n,y_n)$ if started with $x=(19.27, 19.27)$, $y=(19.36,19.36)$}
	\label{table14-1}
	\begin{tabular*}{0.9\textwidth}{@{\extracolsep{\fill} }lrrr}
		\hline
		$n$&5&10&20\\
		\hline
		$x_n$&(19.09,19.09)&(19.28,19.28)&(19.27,19.27)\\
		\hline
		$y_n$&(19.28,19.28)&(19.36,19.36)&(19.36,19.36)\\
		\hline
	\end{tabular*}
\end{center}

\begin{center}
	\captionof{table}{Number $n$ of iterations needed by the a priori estimate if stared with $x=(19.27, 19.27)$, $y=(19.36,19.36)$ and $p=2$}
	\label{table15}
	\begin{tabular*}{0.9\textwidth}{@{\extracolsep{\fill} }lrrrrr}
		\hline
		$\varepsilon$&0.1&0.01&0.001&0.0001&0.00001\\
		\hline
		$n$&16&21&26&31&36\\
		\hline
	\end{tabular*}
\end{center}

\begin{center}
	\captionof{table}{Number $n$ of iterations needed by the a posteriori estimate if stared with $x=(19.27, 19.27)$, $y=(19.36,19.36)$ and $p=2$}
	\label{table16}
	\begin{tabular*}{0.9\textwidth}{@{\extracolsep{\fill} }lrrrrr}
		\hline
		$\varepsilon$&0.1&0.01&0.001&0.0001&0.00001\\
		\hline
		$n$&9&12&15&18&20\\
		\hline
	\end{tabular*}
\end{center}

\subsection{The players are producing a single product and compete on both quantities and prices}

There is a large number of goods where companies can compete on both quality and prices. In this case the equilibrium would depend on balanced decision on what market share to target at a reasonable price. Lets assume that there are only two major players that produce homogeneous products.
The first company can produce qualities from the set $A_x\subseteq [0,\infty)$ at a price $p\in P_x\subseteq [0,\infty)$ and the second one can produce qualities from the set $A_y\subseteq [0,\infty)$ at a price $p\in P_x\subseteq [0,\infty)$, where $A_x$, $A_y$, $P_x$, $P_y$ be nonempty subsets. Let $A_x\times P_x$, $A_y\times P_y$ be subsets of a complete metric space $(\mathbb{R}^2,\rho)$.

\begin{assumption}\label{assumption2}
	\begin{enumerate}[\normalfont (1)] Let there is a duopoly market, satisfying the following assumptions:
		
		\item The two firms are producing homogeneous, perfect substitute products.
		
		\item The first firm can produce qualities from the set $A_x$ at a price $p\in P_x$ and the second firm can produce qualities from the set $A_y$ at a price $p\in P_x$, where $A_x\times P_x$, $A_y\times P_y$ be nonempty, closed subsets of a complete metric space $(\mathbb{R}^2,\rho)$.
		
		\item Let there exists a closed subset $D\subseteq A_x\times P_x\times A_y\times P_y\to A_x$, such that $F:D\to A_x\times P_x$, $f:D\to A_y\times P_y$ and $(F(x,p,y,q),f(x,p,y,q))\subseteq D$ for every $(x,p,y,q)\in D$ be the response functions for firm one and two respectively.
		
		\item Let there exist $\alpha,\beta,\gamma,\delta>0$, $\max\{\alpha+\gamma,\beta+\delta\}<1$, such that the inequality
		\begin{equation}\label{equation:6}
		\begin{array}{lll}
		S_1&=&\rho(F(x,p_1,y,q_1),F(u,p_2,v,q_2))+\rho(f(z,p_3,w,q_3),f(t,p_4,s,q_4))\\
		&\leq& \alpha \rho((x,p_1), (u,p_2))+\beta \rho((y,q_1),(v,q_2))+\gamma\rho((z,p_3), (t,p_4))+\delta \rho((w,q_3),(s,q_4))
		\end{array}
		\end{equation}
		holds for all $(x,p_1,y,q_1), (u,p_2,v,q_2), (z,p_3,w,q_3), (t,p_4,s,q_4)\in D$.
	\end{enumerate}
\end{assumption}

Then
\begin{enumerate}[\normalfont (I)]
	\item\label{item_I-2} There exists a unique pair $(\xi,p,\eta,q)$ in $A_x\times P_x\times A_y\times P_y$, which is a common coupled fixed point for the maps $F$ and $f$,
	i.e. a market equilibrium pair.
	Moreover the iteration sequences $\{x_{n}\}_{n=0}^\infty$, $\{p_{n}\}_{n=0}^\infty$, $\{y_n\}_{n=0}^\infty$ and $\{q_{n}\}_{n=0}^\infty$
	converge to $\xi$, $p$, $\eta$, and $q$, respectively.
	\item\label{item_II-prime-2} a priori error estimates hold
	\begin{equation}\label{equation:8}
	\max\left\{\rho((x_{n},p_n),(\xi,p)),\rho((y_{n},q_n),(\eta,q))\right\}
	\leq \frac{k^{n}}{1-k}(\rho((x_1,p_1),(x_0,p_0))+\rho((y_1,q_1),(y_0,q_0)));
	\end{equation}
	\item\label{item_III-prime-2} a posteriori error estimates hold
	\begin{equation}\label{equation:9}
	\max\left\{\rho((x_{n},p_n),(\xi,p)),\rho((y_{n},q_n),(\eta,q))\right\}
	\leq\frac{k}{1-k}(\rho((x_{n-1},p_{n-1}),(x_{n},p_n))+\rho((y_{n-1},p_{n-1}),(y_{n},q_n)));
	\end{equation}
	\item\label{item_IV-prime-2} The  rate of convergence for the sequences of successive iterations is given by
	\begin{equation}\label{equation:10}
	\rho ((x_n,p_n),(\xi,p))+\rho ((y_n,q_n),(\eta,q))
	\leq k\left(\rho ((x_{n-1},p_{n-1},(\xi,p))+\rho((y_{n-1},p_{n-1}),(\eta,q))\right),
	\end{equation}
	where $k=\max\{\alpha+\gamma,\beta+\delta\}$.
\end{enumerate}

The proof is a direct consequence of Theorem \ref{th:3456}.

{\bf Remark:} If we put $X=(x,p_1)$, $Y=(y,q_1)$, $U=(u,p_2)$, $V=(v,q2)$, $Z=(z,p_3)$, $W=(w,q_3)$, $T=(t,p_4)$, $S=(s,q_4)$ then
(\ref{equation:6}) can be written in the form
\begin{equation}\label{equation:7}
\rho(F(X,Y),F(U,V))+\rho(f(Z,W),f(T,S))\leq\alpha \rho(X,U)+\beta \rho(Y,V)+\gamma\rho(Z,T)+\delta \rho(W,S)
\end{equation}

\subsubsection{Example of a duopoly model, where players compete on quantities and prices simultaneously}

Let us consider a market with two competing firms, producing the same product, and selling it at a price $p$ and $q$ respectively, i.e. $X=(x,p), Y=(y,q)$.
Let us consider the response functions $F(X,Y)=(F_1(X,Y),F_2(X,Y))$ and $f(X,Y)=(f_1(X,Y),f_2(X,Y))$ defined by
$$
F(X,Y)=\left\{\begin{array}{l}
\displaystyle\frac{90-\displaystyle\frac{x}{2}-\displaystyle\frac{y}{3}}{3},\\[10pt]
\displaystyle\frac{4-\displaystyle\frac{p}{2}-\displaystyle\frac{q}{3}}{3};
\end{array}
\right.
\ \
f(X,Y)=\left\{\begin{array}{l}
\displaystyle\frac{100-\displaystyle\frac{x}{4}-\displaystyle\frac{y}{3}}{4}\\[10pt]
\displaystyle\frac{5-\displaystyle\frac{p}{4}-\displaystyle\frac{q}{3}}{4}.
\end{array}
\right.
$$
Let $X=(x,p)$ and $Y=(y,q)$ be subsets of $(\mathbb{R}^2,\|\cdot\|_2)$ (the two dimensional Euclidean space).
Let
\begin{enumerate}[\normalfont (1)]
	\item $A_x=[0,100]\times [0,5]$ and $A_y=[0,100]\times [0,4]$
	\item\label{3ss} $D=[0,100]\times [0,5]\times [0,100]\times [0,4]$
\end{enumerate}
It is easy to see that $F:D\to [0,100]\times [0,5]$, $f:D\to [0,100]\times [0,4]$ and $(F(x,p,y,q),f(x,p,y,q))\subseteq D$ for every $(x,p,y,q)\in D$.

Using the inequality $\frac{a+b}{2}\leq \frac{(a^2+b^2)^{1/2}}{2^{1/2}}$, which holds for any $a,b\geq 0$ we get \sout{the chain of inequalities we} obtain
$$
\begin{array}{lll}
S_{2}&=&\left\|F(x,p_1,y,q_1)-F(u,p_2,v,q_2)\right\|_2=\left\|\left(\displaystyle\frac{90-\displaystyle\frac{x}{2}-\displaystyle\frac{y}{3}}{3},\displaystyle\frac{4-\displaystyle\frac{p_1}{2}-\displaystyle\frac{q_1}{3}}{3}\right)-\left(\displaystyle\frac{90-\displaystyle\frac{u}{2}-\displaystyle\frac{v}{3}}{3},\displaystyle\frac{4-\displaystyle\frac{p_2}{2}-\displaystyle\frac{q_2}{3}}{3}\right)\right\|_2\\[14pt]
&=&\left\|\left(\displaystyle\frac{\displaystyle\frac{u-x}{2}+\displaystyle\frac{v-y}{3}}{3},\displaystyle\frac{\displaystyle\frac{p_1-p_2}{2}+\displaystyle\frac{q_1-q_2}{3}}{3}\right)\right\|_2
\leq\displaystyle\frac{1}{3}\sqrt{\displaystyle\left(\frac{|u-x|+|v-y|}{2}\right)^2+\displaystyle\left(\frac{|p_1-p_2|+|q_1-q_2|}{2}\right)^2}\\[14pt]
&\leq&\displaystyle\frac{1}{3}\left(\displaystyle\frac{|u-x|+|v-y|}{2}+\displaystyle\frac{|p_1-p_2|+|q_1-q_2|}{2}\right)
=
\displaystyle\frac{1}{3}\left(\displaystyle\frac{|u-x|+|p_1-p_2|}{2}+\displaystyle\frac{|v-y|+|q_1-q_2|}{2}\right)\\[14pt]
&\leq&\displaystyle\frac{1}{3\sqrt{2}}\sqrt{|u-x|^2+|p_1-p_2|^2}+\displaystyle\frac{1}{3\sqrt{2}}\sqrt{|v-y|+|q_1-q_2|}
=\displaystyle\frac{1}{3\sqrt{2}}\rho (U,X)+\displaystyle\frac{1}{3\sqrt{2}}\rho(V,Y)
\end{array}
$$
and
$$
\begin{array}{lll}
S_{3}&=&\left\|f(x,p_1,y,q_1)-f(u,p_2,v,q_2)\right\|_2=\left\|\left(\displaystyle\frac{100-\displaystyle\frac{x}{4}-\displaystyle\frac{y}{3}}{4},\displaystyle\frac{5-\displaystyle\frac{p_1}{4}-\displaystyle\frac{q_1}{3}}{4}\right)-\left(\displaystyle\frac{100-\displaystyle\frac{u}{4}-\displaystyle\frac{v}{3}}{4},\displaystyle\frac{5-\displaystyle\frac{p_2}{4}-\displaystyle\frac{q_2}{3}}{4}\right)\right\|_2\\[14pt]
&=&\left\|\left(\displaystyle\frac{\displaystyle\frac{u-x}{4}+\displaystyle\frac{v-y}{3}}{4},\displaystyle\frac{\displaystyle\frac{p_1-p_2}{4}+\displaystyle\frac{q_1-q_2}{3}}{4}\right)\right\|_2\leq\displaystyle\frac{1}{4}\sqrt{\displaystyle\left(\frac{|u-x|+|v-y|}{3}\right)^2+\displaystyle\left(\frac{|p_1-p_2|+|q_1-q_2|}{3}\right)^2}\\[14pt]
&\leq&\displaystyle\frac{1}{4}\left(\displaystyle\frac{|u-x|+|v-y|}{3}+\displaystyle\frac{|p_1-p_2|+|q_1-q_2|}{3}\right)
<
\displaystyle\frac{1}{4}\left(\displaystyle\frac{|u-x|+|p_1-p_2|}{2}+\displaystyle\frac{|v-y|+|q_1-q_2|}{2}\right)\\[14pt]
&\leq&\displaystyle\frac{1}{4\sqrt{2}}\sqrt{|u-x|^2+|p_1-p_2|^2}+\displaystyle\frac{1}{4\sqrt{2}}\sqrt{|v-y|+|q_1-q_2|}
=\displaystyle\frac{1}{4\sqrt{2}}\rho (U,X)+\displaystyle\frac{1}{4\sqrt{2}}\rho(V,Y).
\end{array}
$$
Therefore
$$
\left\|F(X,Y)-F(U,V)\right\|+\left\|f(Z,W)-f(T,S)\right\|\leq \displaystyle\frac{1}{3\sqrt{2}}\left\|X-U\right\|+\frac{1}{3\sqrt{2}}\left\|Y-V\right\|+\displaystyle\frac{1}{4\sqrt{2}}\left\|Z-T\right\|+\frac{1}{4\sqrt{2}}\left\|W-S\right\|.
$$
From the inequalities $\frac{1}{3\sqrt{2}}+\frac{1}{4\sqrt{2}}<1$ it follows that
the ordered pair $(F,f)$ satisfies Assumption \ref{assumption2} with constants $\alpha=\frac{1}{3\sqrt{2}}$, $\beta=\frac{1}{3\sqrt{2}}$, $\gamma=\frac{1}{4\sqrt{2}}$ and $\delta=\frac{1}{4\sqrt{2}}$.
Thus there exists an equilibrium pair $(x,y)$ and for any initial start in the economy the iterated sequences $(x_n,y_n)$ converge to the 
market equilibrium $(x,y)$. We get in this case that the equilibrium pair of the production of the two firms is $x=(23.64, 1.03)$, $y=(21.71,1.09)$.

\subsection{Players' production sets have an empty intersection}

\begin{assumption}\label{assumption3}
	\begin{enumerate}[\normalfont (1)] Let there is a duopoly market, satisfying the following assumptions:
		
		\item The two firms are producing homogeneous perfect substitute products.
		
		\item The first firm can produce qualities from the set $A_x$ and the second firm can produce qualities from the set $A_y$, where $A_x$ and $A_y$ be nonempty, closed and convex subsets of a uniformly convex Banach space $(X,\|\cdot\|)$
		
		\item Let there exist a closed and convex subset $D\subseteq A_x\times A_y$ and maps $F:D\to A_x$ and $f:D\to A_y$, such that $(F(x,y),f(x,y))\subseteq D$ for every $(x,y)\in D$, be the response functions for firm one and two respectively
		
		\item Let there exist $\alpha,\beta>0$, $\alpha+\beta<1$, such that
		\begin{equation}\label{equation:11}
		\|F(x,y)-f(u,v)\|\leq \alpha \|x-v\|+\beta \|y-u\|+(1-(\alpha+\beta))d
		\end{equation}
		for all $(x,y),(u,v)\in A_x\times A_y$, where $d={\rm dist}(A_x,A_y)=\inf\{\|x-y\|:x\in A_x,y\in A_y\}$.
	\end{enumerate}
\end{assumption}

Then there exists a unique pair $(\xi,\eta)$ in $A_x\times A_y$, which is a coupled best point for the pair of maps $(F,f)$,
i.e. a market equilibrium pair.
Moreover the iteration sequences $\{x_{n}\}_{n=0}^\infty$ and $\{y_n\}_{n=0}^\infty$\sout{, defined in Definition \ref{iterated_sequence}} converge to $\xi$ and $\eta$, respectively.

If in addition $(X,\|\cdot\|)$ has a modulus of convexity of power type
with constants $(C,q)\in (0,\infty)\times (1,\infty)$, then
\begin{enumerate}[\normalfont (i)]
	\item\label{item_ii*-0} a priori error estimates hold
	$$
	\left\|\xi-x_{m}\right\|
	\leq M_0\root{q}\of{\frac{\max\{W_{0,1}(x,y),W_{0,0}(x,y)\}}{Cd}}\cdot\frac{\root {q} \of {(\alpha+\beta)^{m}}}{1-\root {q} \of {\alpha+\beta}};
	\left\|\eta-y_{m}\right\|
	\leq N_0\root{q}\of{\frac{\max\{W_{0,1}(y,x),W_{0,0}(y,x)\}}{Cd}}\cdot\frac{\root {q} \of {(\alpha+\beta)^{m}}}{1-\root {q} \of {\alpha+\beta}};
	$$
	\item\label{item_iii-0} a posteriori error estimates hold
	$$
	\left\|\xi-x_{n}\right\|
	\leq M_{n-1}\root{q}\of{\frac{\max\{W_{n-1,n}(x,y),W_{n-1,n-1}(x,y)\}}{Cd}}
	c.
	\left\|\eta-y_{2n}\right\|
	\leq N_{n-1}\root{q}\of{\frac{\max\{W_{n-1,n}(y,x),W_{n-1,n-1}(y,x)\}}{Cd}}
	c,
	$$
	where $W_{n,m}(x,y)=\|x_n-x_m\|-d$, $M_n\max\{\|x_n-y_n\|,\|x_n-y_{n+1}\|\}$, $N_n\max\{\|x_n-y_n\|,\|y_n-x_{n+1}\|\}$ and $c=\frac{\root {q} \of {\alpha+\beta}}{1-\root {q} \of {\alpha+\beta}}$.
\end{enumerate}

The proof is a direct consequence of Theorem \ref{th:main}.

\subsubsection{Players' production sets have an empty intersection, each player is producing two goods}

Let us consider a market with two competing firms, each firm produces two product and any one of the items is completely replaceable with the similar product of the other firm. Let us assume that the first firm can produce
much less quantities than the second one, i.e. if $x_1,x_2$ be the quantities produced by the first firm and $y_1,y_2$ be the quantities produced by the second one and, then $x_1,x_2\in [0,1]$ and $y_1,y_2\in [2,3]$. Let $A_x=[0,1]\times [0,1]$ $A_y=[2,3]\times [2,3]$ be considered as subsets of $(\mathbb{R}^2,\|\cdot\|_2)$,
which is a uniformly convex Banach space with modulus of convexity $\delta_{\|\cdot\|_2}(\varepsilon)\geq \frac{\varepsilon^2}{8}$ of power type \cite{Z}.
Let us consider the response functions $F(x_1,x_2,y_1,y_2)$ and $f(x_1,x_2,y_1,y_2)$ defined by
$$
F(x,y)=\left\{
\begin{array}{l}
\frac{3x_1}{8}+\frac{x_2}{8}-\frac{3y_1}{16}-\frac{y_2}{16}+1\\
\frac{x_1}{8}+\frac{3x_2}{8}-\frac{y_1}{16}-\frac{3y_2}{16}+1
\end{array}
\right. ,
\ \
f(x,y)=\left\{
\begin{array}{l}
-\frac{3x_1}{16}-\frac{x_2}{16}+\frac{3y_1}{4}+\frac{y_2}{4}+\frac{5}{4}\\
-\frac{x_1}{16}-\frac{3x_2}{16}+\frac{y_1}{4}+\frac{3y_2}{4}+\frac{5}{4}
\end{array}
\right. .
$$
It is easy to see that $F:[0,1]\times [0,1]\times [2,3]\times [2,3]\to [0,1]\times[0,1]$ and $f:[0,1]\times [0,1]\times [2,3]\times [2,3]\to [2,3]\times[2,3]$

Indeed the inequalities $0\leq\frac{3x_1}{8}+\frac{x_2}{8}-\frac{3y_1}{16}-\frac{y_2}{16}+1\leq 1$ are equivalent to
$$
\left|
\begin{array}{rcl}
\frac{3y_1}{16}+\frac{y_2}{16}&\leq& \frac{3x_1}{8}+\frac{x_2}{8}+1\\[10pt]
\frac{3x_1}{8}+\frac{x_2}{8}&\leq& \frac{3y_1}{16}+\frac{y_2}{16}
\end{array}
\right.
$$
for $(x_1,x_2,y_1,y_2)\in [0,1]\times [0,1]\times [2,3]\times [2,3]$. 

The inequalities $2\leq-\frac{3u_1}{16}-\frac{u_2}{16}+\frac{3v_1}{4}+\frac{v_2}{4}+\frac{5}{4}\leq 3$ are equivalent to
$$
\left|
\begin{array}{rcl}
\frac{3u_1}{16}+\frac{u_2}{16}&\leq&\frac{3v_1}{4}+\frac{v_2}{4}\\[10pt]
\frac{3v_1}{4}+\frac{v_2}{4}&\leq& \frac{7}{4}+\frac{3u_1}{16}+\frac{u_2}{16}
\end{array}
\right.
$$
for $(u_1,u_2,v_1,v_2)\in [0,1]\times [0,1]\times [2,3]\times [2,3]$.  

Using the inequality $\left(\frac{3a}{4}+\frac{b}{4}\right)^2\leq\frac{3}{4}a^2+\frac{1}{4}b^2$, 
i.e. $\left|\frac{3a}{4}+\frac{b}{4}\right|\leq\frac{\sqrt{3a^2+b^2}}{2}$ we obtain
$$
\begin{array}{lll}
S_{4}&=&\|F(x_1,x_2,y_1,y_2)-f(u_1,u_2,v_1,v_2)\|_2\\[10pt]
&=&\left\|\left(\frac{3x_1}{8}+\frac{x_2}{8}-\frac{3y_1}{16}-\frac{y_2}{16}+1,\frac{x_1}{8}+\frac{3x_2}{8}-\frac{y_1}{16}-\frac{3y_2}{16}+1\right)-\left(-\frac{3u_1}{16}-\frac{u_2}{16}+\frac{3v_1}{4}+\frac{v_2}{4}+\frac{5}{4},-\frac{u_1}{16}-\frac{3u_2}{16}+\frac{v_1}{4}+\frac{3v_2}{4}+\frac{5}{4}\right)\right\|_2\\[10pt]
&=&\left\|\left(\frac{1}{4}+\frac{3(x_1-v_1)}{8}+\frac{x_2-v_2}{8},\frac{1}{4}+\frac{3(u_1-y_1)}{16}+\frac{u_2-y_2}{16}\right)\right\|_2
\leq\left\|\left(\frac{1}{4},\frac{1}{4}\right)\right\|_2+\left\|\left(\frac{3(x_1-v_1)}{8}+\frac{x_2-v_2}{8},\frac{3(u_1-y_1)}{16}+\frac{u_2-y_2}{16}\right)\right\|_2\\[10pt]
&\leq&\frac{\sqrt{2}}{4}+\left\|\left(\frac{3(x_1-v_1)}{8}+\frac{x_2-v_2}{8},0\right)\right\|_2+\left\|\left(0,\frac{3(u_1-y_1)}{16}+\frac{u_2-y_2}{16}\right)\right\|_2\\[10pt]
&=&\frac{\sqrt{2}}{4}+\frac{1}{2}\left\|\left(\frac{3(x_1-v_1)}{4}+\frac{x_2-v_2}{4},0\right)\right\|_2+\frac{1}{4}\left\|\left(0,\frac{3(u_1-y_1)}{4}+\frac{u_2-y_2}{4}\right)\right\|_2\\[10pt]
&=&\frac{\sqrt{2}}{4}+\frac{1}{2}\left|\frac{3(x_1-v_1)}{4}+\frac{x_2-v_2}{4}\right|+\frac{1}{4}\left|\frac{3(u_1-y_1)}{4}+\frac{u_2-y_2}{4}\right|_2\leq\frac{\sqrt{2}}{4}+\frac{\sqrt{3|x_1-v_1|^2+|x_2-v_2|^2}}{4}+\frac{\sqrt{3|u_1-y_1|^2+|u_2-y_2|^2}}{8}\\[10pt]
&\leq&\frac{\sqrt{2}}{4}+\frac{\sqrt{3}}{4}\sqrt{|x_1-v_1|^2+|x_2-v_2|^2}+\frac{\sqrt{3}}{8}\sqrt{|u_1-y_1|^2+|u_2-y_2|^2}
=\frac{\sqrt{3}}{4}\|x-v\|_2+\frac{\sqrt{3}}{8}\|y-u\|_2+\frac{\sqrt{2}}{4}\\[10pt]
&\leq&\frac{\sqrt{3}}{4}\|x-v\|_2+\frac{\sqrt{3}}{8}\|y-u\|_2+\left(1-\frac{\sqrt{3}}{4}-\frac{\sqrt{3}}{8}\right)\sqrt{2}
=\frac{\sqrt{3}}{4}\|x-v\|_2+\frac{\sqrt{3}}{8}\|y-u\|_2+\left(1-\frac{\sqrt{3}}{4}-\frac{\sqrt{3}}{8}\right)d,
\end{array}
$$
where $d={\rm dist}([0,1]\times [0,1],[2,3]\times [2,3])=\sqrt{2}$.
Therefore
the ordered pair $(F,f)$ satisfies Assumption \ref{assumption3} with constants $\alpha=\frac{\sqrt{3}}{4}$, $\beta=\frac{\sqrt{3}}{8}$.
Thus there exists an equilibrium pair $(x,y)=((x_1,x_2),(y_1,y_2))$ and for any initial start in the economy, the iterated sequence 
$(x^n,y^n)=((x_1^n,x_2^n),(y_1^n,y_2^n))$ converges to the 
market equilibrium $(x,y)$. We get in this case that the equilibrium pair of the production of the two firms is $x=(1,1)$, $y=(2,2)$ and the total production will be $a=(3,3)$.

\begin{center}
	\captionof{table}{Values of the iterated sequence $(x^n,y^n)$ if stared with $((0.01,0.2),(2.9,2.1))$}
	\label{table17}
	\begin{tabular*}{0.9\textwidth}{@{\extracolsep{\fill} }lrrr}
		\hline
		$n$&0&1&2\\
		\hline
		$(x^n_1,x^n_2)$&(0.01,0.9)&(0.44,0.76)&(0.66,0.75)\\
		\hline
		$(y^n_1,y^n_2)$&(2.90,2.1)&(2.44,2.33)&(2.31,2.27)\\
		\hline
	\end{tabular*}
\end{center}

\begin{center}
	\captionof{table}{Values of the iterated sequence $(x^n,y^n)$ if stared with $((0.01,0.2),(2.9,2.1))$}
	\label{table17-1}
	\begin{tabular*}{0.9\textwidth}{@{\extracolsep{\fill} }lrrr}
		\hline
		$n$5&10&20\\
		\hline
		$(x^n_1,x^n_2)$&(0.87,0.88)&(0.97,0.97)&(1,1)\\
		\hline
		$(y^n_1,y^n_2)$&(2.12,2.12)&(2.03,2.03)&(2,2)\\
		\hline
	\end{tabular*}
\end{center}

\subsection{Equilibrium in the case, when the two player are producing just one good and the production set have an empty intersection}

Let us recall that the properties of the modulus of convexity $\delta_{\|\cdot\|}$ are investigated if the Banach space is at least two dimensional.
As far as $\mathbb{R}$, endowed with its canonical norm is a subspace of $\mathbb{R}^2_2$ we have
$\delta_{(\mathbb{R},|\cdot|)}(\varepsilon)\geq \delta_{(\mathbb{R}^2_2,\|\cdot\|_2)}(\varepsilon)=\frac{\varepsilon^2}{8}$.
A direct calculaion shows that $\delta_{(\mathbb{R},|\cdot|)}(\varepsilon)=\frac{\varepsilon}{2}$ \cite{Ilchev-zlatanov}.

We will formulate Assumption \ref{assumption3} in the case when the underlying Banach space is $(\mathbb{R},|\cdot|)$.

\begin{assumption}\label{assumption4}
	\begin{enumerate}[\normalfont (1)] Let there is a duopoly market, satisfying the following assumptions:
		
		\item The two firms are producing homogeneous perfect substitute products
		
		\item The first firm can produce qualities from the set $A_x$ and the second firm can produce qualities from the set $A_y$, where $A_x$ and $A_y$ be nonempty closed intervals of $(\mathbb{R},|\cdot|)$
		
		\item Let there exist a close and convex subset $D\subseteq A_x\times A_y$ and maps $F:D\to A_x$ and $f:D\to A_y$, such that $(F(x,y),f(x,y))\subseteq D$ for every $(x,y)\in D$, be the response functions for firm one and two respectively
		
		\item Let there exist $\alpha,\beta>0$, $\alpha+\beta<1$, such that
		\begin{equation}\label{equation:16}
		|F(x,y)-f(u,v)|\leq \alpha |x-v|+\beta |y-u|+(1-(\alpha+\beta))d
		\end{equation}
		for all $(x,y),(u,v)\in A_x\times A_y$, where $d={\rm dist}(A_x,A_y)=\inf\{|x-y|:x\in A_x,y\in A_y\}$.
	\end{enumerate}
\end{assumption}

Then there exists a unique pair $(\xi,\eta)$ in $A_x\times A_y$, which is a coupled best point for the pair of maps $(F,f)$,
i.e. a market equilibrium pair.
Moreover the iteration sequences $\{x_{n}\}_{n=0}^\infty$ and $\{y_n\}_{n=0}^\infty$+9 converge to $\xi$ and $\eta$, respectively.

\begin{enumerate}[\normalfont (i)]
	\item\label{item_ii*} a priori error estimates hold
	$$
	\left|\xi-x_{m}\right|
	\leq 2\max\{|x_0-y_0|,|x_0-y_1|\}\frac{\max\{W_{0,1}(x,y),W_{0,0}(x,y)\}}{d}\cdot\frac{(\alpha+\beta)^{m}}{1-(\alpha+\beta)};
	$$
	$$
	\left|\eta-y_{m}\right|
	\leq 2\max\{|x_0-y_0|,|x_1-y_0|\}\frac{\max\{W_{0,1}(y,x),W_{0,0}(y,x)\}}{d}\cdot\frac{(\alpha+\beta)^{m}}{1-(\alpha+\beta)};
	$$
	\item\label{item_iii-1} a posteriori error estimates hold
	$$
	\left|\xi-x_{n}\right|
	\leq 2\max\{|x_{n-1}-y_{n-1}|,|x_{n-1}-y_{n}|\}\frac{\max\{W_{n-1,n}(x,y),W_{n-1,n-1}(x,y)\}}{d}
	\frac{\alpha+\beta}{1-(\alpha+\beta)}.
	$$
	$$
	\left|\eta-y_{n}\right|
	\leq 2\max\{|x_{n-1}-y_{n-1}|,|x_{n}-y_{n-1}|\}\frac{\max\{W_{n-1,n}(y,x),W_{n-1,n-1}(y,x)\}}{d}
	\frac{\alpha+\beta}{1-(\alpha+\beta)},
	$$
	where $W_{n,m}(x,y)=|x_n-x_m|-d$.
\end{enumerate}

The proof is a direct consequence of Theorem \ref{th:main} and the remark that $(\mathbb{R},|\cdot|)$ is an uniformly convex Banach space with modulus of convexity $\delta_{|\cdot|}(\varepsilon)=\frac{\varepsilon}{2}$.

\subsubsection{Example when the two players are producing just one good}

Let us consider a market with two competing firms, producing two products, that are perfect substitutes. Let us assume that the first firm can produce
much smaller quantities than the second one, i.e. $x, y$, so that $x\in [0,1]$ and $y\in [2,3]$. 
Let us consider the response functions $F(x,y)$ and $f(x,y)$ defined by
$$
F(x,y)=\frac{x}{2}-\frac{y}{4}+1,
\ \
f(x,y)=-\frac{u}{4}+\frac{v}{2}+\frac{5}{4}
$$
It is easy to see that $F:[0,1]\times [2,3]\to [0,1]$ and $f:[0,1]\times [2,3]\to [2,3]$

Indeed the inequalities $0\leq\frac{x}{2}-\frac{y}{4}+1\leq 1$ are equivalent to
$$
\left|
\begin{array}{rcl}
\frac{y}{4}&\leq& \frac{x}{2}+1\\[10pt]
\frac{x}{2}&\leq& \frac{y}{4}
\end{array}
\right.
$$
for $(x,y)\in [0,1]\times [2,3]$.

The inequalities $2\leq-\frac{u}{4}+\frac{v}{2}+\frac{5}{4}\leq 3$ are equivalent to
$$
\left|
\begin{array}{rcl}
\frac{3}{4}+\frac{u}{4}&\leq&\frac{v}{2}\\[10pt]
\frac{v}{2}&\leq& \frac{7}{4}+\frac{u}{4}
\end{array}
\right.
$$
for $(u,v)\in [0,1]\times [2,3]$.  

Then we obtain
$$
\begin{array}{lll}
|F(x,y)-f(u,v)|&=&\left|-\displaystyle\frac{u}{4}+\frac{v}{2}+\displaystyle\frac{5}{4}-\left(\frac{x}{2}-\frac{y}{4}+1\right)\right|
\leq\displaystyle\frac{|v-x|}{2}+\frac{|y-u|}{4}+\left|\frac{5}{4}-1\right|\\[10pt]
&=&\displaystyle\frac{|v-x|}{2}+\frac{|y-u|}{4}+\frac{1}{4}
=\displaystyle\frac{|v-x|}{2}+\frac{|y-u|}{4}+\left(1-\left(\frac{1}{2}+\frac{1}{4}\right)\right)d.
\end{array}
$$
Therefore
the ordered pair $(F,f)$ satisfies Assumption \ref{assumption4} with constants $\alpha=\frac{1}{2}$, $\beta=\frac{1}{4}$.
Thus there exists an equilibrium pair $(x,y)$ and for any initial start in the economy the iterated sequences $(x_n,y_n)$ converge to the 
market equilibrium $(x,y)$. We get in this case that the equilibrium pair of the production of the two firms is $x=1$, $y=2$ and the total production will be $a=3$.

\begin{center}
	\captionof{table}{Values of the iterated sequence $(x_n,y_n)$ if stared with $(0.2,2.8)$}
	\label{table18}
	\begin{tabular*}{0.9\textwidth}{@{\extracolsep{\fill} }lrrrrrrr}
		\hline
		$n$&0&1&2&5&10&20&30\\
		\hline
		$x_n$&0.2&0.4&0.55&0.81&0.95&0.997&0.9998\\
		\hline
		$y_n$&2.8&2.6&2.45&2.18&2.04&2.002&2.0001\\
		\hline
	\end{tabular*}
\end{center}

\begin{center}
	\captionof{table}{Number $n$ of iterations needed by the a priori estimate if stared with $(0.2,2.8)$}
	\label{table19}
	\begin{tabular*}{0.9\textwidth}{@{\extracolsep{\fill} }lrrrrr}
		\hline
		$\varepsilon$&0.1&0.01&0.001&0.0001&0.00001\\
		\hline
		$n$&21&29&37&45&53\\
		\hline
	\end{tabular*}
\end{center}

\begin{center}
	\captionof{table}{Number $n$ of iterations needed by the a posteriori estimate if stared with $(100,20)$}
	\label{table20}
	\begin{tabular*}{0.9\textwidth}{@{\extracolsep{\fill} }lrrrrr}
		\hline
		$\varepsilon$&0.1&0.01&0.001&0.0001&0.00001\\
		\hline
		$n$&17&25&33&41&49\\
		\hline
	\end{tabular*}
\end{center}

\subsection{Conclusion}

Markets dominated by a small group of players are not uncommon even in a fast moving global economy. Therefore it is essential to analyze these cases, understand what leads to equilibrium and how different companies respond to changes in the economic environment. In this paper we have built a model on existence and uniqueness of market equilibrium in oligopoly markets that can be derived from response functions of major players. We assume that goods produced by different market players are perfect substitutes as this simplifies mathematical description of the model. Due to the fact that response functions can also account for difference in product qualities, the model can also be applied to situations where price is not the only factor on which companies compete. With carefully constructed response function it is possible to fully comprehend all five basic factors influencing competition - product features, number of sellers, information ability and barriers to entry.
Existence and uniqueness of equilibrium can be analyzed with both linear and non-linear response functions which proves to be a very flexible approach when studying different markets, which despite being dominated by small number of companies may have quite different characteristics. Applications of the suggested model when production sets of market participants have empty intersection are particularly important when it is necessary to account for real world limitations like huge economies of scale or unique resources available to some players.
One specific application and advantage of the suggested model is that calibration of response functions can be performed in a way that matches observed past behavior (output and prices) or major market players. This way it is possible to assess not only equilibrium stability but also the way that different companies react to changes in the environment.

\bibliographystyle{plain}
\bibliography{Oligopoly}

\end{document}